      \def\sL{{\mathfrak L}}
\def\sM{{\mathfrak M}}   \def\sN{{\mathfrak N}}
   \def\dN{{\mathbb N}}
   \def\cH{{\mathcal H}}   
   \def\cK{{\mathcal K}}   \def\cL{{\mathcal L}}
      \def\cR{{\mathcal R}}
   \def\cT{{\mathcal T}}   \def\cU{{\mathcal U}}
\def\cV{{\mathcal V}}   \def\cW{{\mathcal W}}
   \def\bB{{\mathbf B}}
\def\dim{{\rm dim\,}}
\def\wt{\widetilde}
\def\ran{{\rm ran\,}}
\def\cran{{\rm \overline{ran}\,}}
\def\dom{{\rm dom\,}}
\def\clos{{\rm clos\,}}
\def\dim{{\rm dim\,}}
\let\xker=\ker \def\ker{{\xker\,}}
\def\uphar{{\upharpoonright\,}}
\def\wt{\widetilde}
\newtheorem{theorem}{Theorem}[section]
\newtheorem{lemma}[theorem]{Lemma}
\newtheorem{proposition}[theorem]{Proposition}
\newtheorem{corollary}[theorem]{Corollary}
\newtheorem{remark}[theorem]{Remark}
\def\wt{\widetilde}
\def\f{\varphi}
\def\uphar{{\upharpoonright\,}}
\numberwithin{equation}{section}
\begin{document}
\title[ mappings connected with parallel addition ]
{On the mappings connected with parallel addition of nonnegative operators}

\author[Yury Arlinski\u{\i}]{Yu.M. Arlinski\u{\i}}
\address{Department of Mathematical Analysis  \\
East Ukrainian National University  \\
Prospect Radyanskii, 59-a, Severodonetsk, 93400, Ukraine\\
and
Department of Mathematics, Dragomanov National Pedagogical University,
Kiev, Pirogova 9, 01601, Ukraine}
 \email{yury.arlinskii@gmail.com}
\subjclass[2010]
{47A05, 47A64, 46B25}
\keywords{Parallel sum, iterates, fixed point}

\begin{abstract}
We study a mapping $\tau_G$ of the cone ${\mathbf B}^+({\mathcal H})$ of bounded nonnegative self-adjoint operators in a complex Hilbert space ${\mathcal H}$ into itself. This mapping is defined as a strong limit of iterates of the mapping ${\mathbf B}^+({\mathcal H})\ni X\mapsto\mu_G(X)=X-X:G\in{\mathbf B}^+({\mathcal H})$, where $G\in{\mathbf B}^+({\mathcal H})$ and $X:G$ is the parallel sum.
We find explicit expressions for $\tau_G$ and establish its properties. In particular, it is shown that $\tau_G$ is sub-additive, homogeneous of  degree one, and its image coincides with  set of its fixed points which is the subset of ${\mathbf B}^+({\mathcal H})$, consisting of all $Y$ such that ${\rm ran\,} Y^{1/2}\cap{\rm ran\,} G^{1/2}=\{0\}$.   Relationships between $\tau_G$ and Lebesgue type decomposition of nonnegative self-adjoint operator are established and applications to the properties of unbounded self-adjoint operators with trivial intersections of their domains are given.
\end{abstract}
\maketitle
\tableofcontents
\section{Introduction}
We will use the following notations: $\dom A$, $\ran A$, and $\ker A$ are the domain, the range, and the kernel of a linear operator $A$, $\cran A$ and $\clos{\cL}$ denote the closure of $\ran A$ and of the set $\cL$, respectively. A linear operator $A$ in a Hilbert space $\cH$ is called
\begin{itemize}
\item bounded from bellow if $(A f,f)\ge m||f||^2 $ for all $f\in\dom A$ and some real number $m$,
 \item positive definite if $m>0$,
  \item nonnegative if $(A f,f)\ge 0 $ for all $f\in\dom A.$
\end{itemize}
  The cone of all bounded self-adjoint non-negative operators in a complex Hilbert space $\cH$ we denote by $\bB^+(\cH)$ and let $\bB^{+}_0(\cH)$ be the subset of operators from $\bB^+(\cH)$ with
trivial kernels. If $A,B\in \bB^+(\cH)$ and $C=ABA$, then by Douglas theorem \cite{Doug} one has $\ran C^{1/2}=A\ran B^{1/2}$.
If $\cK$ is a subspace (closed linear manifold) in $\cH$, then $P_\cK$ is the orthogonal projection in $\cH$ onto $\cK$, and $\cK^\perp\stackrel{def}{=}\cH\ominus\cK$.

Let $X,G\in\bB^+(\cH)$.
 The
\textit{parallel sum} $X:G$ is defined by the quadratic form:
\[
\left((X:G)h,h\right)\stackrel{def}{=}\inf_{f,g \in \cH}\left\{\,\left(Xf,f\right)+\left(Gg,g\right):\,
   h=f+g \,\right\} \ ,
\]
see \cite{AD}, \cite{FW}, \cite{K-A}. One can establish for $X:G$  the following equivalent
definition  \cite{AT}, \cite{PSh}
\[
X:G=s-\lim\limits_{\varepsilon\downarrow 0}\,
X\left(X+G+\varepsilon I\right)^{-1}G.
\]
Then for positive definite bounded self-adjoint operators $X$ and $G$ we obtain
\[
X:G=(X^{-1}+G^{-1})^{-1} \ .
\]
As is known \cite{PSh}, $X:G$ can be calculated as follows
\[
X:G=X-\left((X+G)^{-1/2}X\right)^*\left((X+G)^{-1/2}X\right).
\]
Here for $A\in\bB^+(\cH)$ by $A^{-1}$ we denote the Moore--Penrose
pseudo-inverse.
The operator $X:G$ belongs to $\bB^+(\cH)$ and, as it is established in \cite{AT}, the equality
\begin{equation}
\label{ukfdyj}
\ran
(X:G)^{1/2}=\ran X^{1/2}\cap\ran G^{1/2}
\end{equation}
holds true.
If $T$ is bounded operator in $\cH$, then in general
$$T^*(A:B)T\le (T^*AT):(T^*BT)$$
for $A,B\in\bB^+(\cH)$, but, see  \cite{Ar2},
\begin{multline}
\label{trans}\ker T^*\cap\ran (A+B)^{1/2}=\{0\} \\
\Longrightarrow  T^*(A:B)T= (T^*AT):(T^*BT).
\end{multline}
Besides, if $A'\le A''$, $B'\le B''$,  then $A':B'\le A'':B''$ and, moreover \cite{PSh},
\begin{equation}
\label{monotcon}
A_n\downarrow A\quad\mbox{and}\quad B_n\downarrow B\quad\mbox{strongly}\Rightarrow A_n:B_n\downarrow A:B\quad\mbox{strongly}.
\end{equation}
Let $X,G\in\bB^+(\cH)$. Since $X\le X+G$ and $G\le X+G$, one gets
\begin{multline}
\label{fg2}
 X=(X+G)^{1/2}M(X+G)^{1/2},\\
   G=(X+G)^{1/2}(I-M)(X+G)^{1/2}
\end{multline}
for some non-negative contraction $M$ on $\cH$ with $\ran M\subset\cran(X+G)$.
\begin{lemma} {\rm \cite{Ar2}}
\label{yu1} Suppose $X, G\in \bB^+(\cH)$ and let $M$ be as in
\eqref{fg2}. Then
\[
 X:G=(X+G)^{1/2}(M-M^2)(X+G)^{1/2}.
\]
\end{lemma}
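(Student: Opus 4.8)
The plan is to reduce everything to the closed formula for the parallel sum quoted above,
\[
X:G = X - \bl((X+G)^{-1/2}X\br)^*\bl((X+G)^{-1/2}X\br),
\]
and to identify the operator $(X+G)^{-1/2}X$ explicitly via the representation \eqref{fg2}. Throughout I write $S:=X+G$, so that $X=S^{1/2}MS^{1/2}$ with $M$ a non-negative contraction and $\ran M\subset\cran S$.

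First I would record the pseudo-inverse identity $S^{-1/2}S^{1/2}=P_{\cran S}$. It rests on two standard facts: for a bounded non-negative self-adjoint operator one has $\cran S^{1/2}=\cran S=(\ker S)^\perp$, and for any bounded operator $A$ the product $A^{-1}A$ of the Moore--Penrose pseudo-inverse with $A$ equals the orthogonal projection onto $(\ker A)^\perp$. Even though $S^{-1/2}$ may be unbounded, $S^{-1/2}S^{1/2}$ is therefore the bounded projection $P_{\cran S}$. Next, from $X=S^{1/2}MS^{1/2}$ one sees $\ran X\subset\ran S^{1/2}\subset\dom S^{-1/2}$, so $S^{-1/2}X$ is everywhere defined; and since $MS^{1/2}h\in\ran M\subset\cran S$ for every $h$, one gets
\[
(X+G)^{-1/2}Xh=S^{-1/2}S^{1/2}\bl(MS^{1/2}h\br)=P_{\cran S}\bl(MS^{1/2}h\br)=MS^{1/2}h,
\]
so that $(X+G)^{-1/2}X=MS^{1/2}$ is bounded. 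Feeding this back, using $M=M^*$ and $X=S^{1/2}MS^{1/2}$ once more, gives
\[
X:G=S^{1/2}MS^{1/2}-(MS^{1/2})^*(MS^{1/2})=S^{1/2}MS^{1/2}-S^{1/2}M^2S^{1/2},
\]
which is the claimed formula $(X+G)^{1/2}(M-M^2)(X+G)^{1/2}$.

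As a check, and as an alternative that sidesteps the pseudo-inverse, I would also run the computation through the resolvent definition $X:G=s-\lim_{\varepsilon\downarrow0}X(S+\varepsilon I)^{-1}G$. Substituting \eqref{fg2} and using $S^{1/2}(S+\varepsilon I)^{-1}S^{1/2}=I-\varepsilon(S+\varepsilon I)^{-1}$ from the functional calculus, one obtains
\[
X(S+\varepsilon I)^{-1}G=S^{1/2}M(I-M)S^{1/2}-S^{1/2}M\,\varepsilon(S+\varepsilon I)^{-1}(I-M)S^{1/2}.
\]
Since $\varepsilon(S+\varepsilon I)^{-1}\to P_{\ker S}$ strongly and, because $\ran M\subset\cran S$, the vector $(I-M)S^{1/2}h=S^{1/2}h-MS^{1/2}h$ is orthogonal to $\ker S$ for all $h$, the second term vanishes strongly in the limit, leaving $S^{1/2}(M-M^2)S^{1/2}$ again; this also uses \eqref{monotcon}-type continuity only implicitly, via the plain definition of the strong limit.

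The only delicate point in either route is the same: justifying that the formally unbounded object $(X+G)^{-1/2}X$ (respectively the $\varepsilon$-remainder) behaves as if $X+G$ were invertible. Both arguments go through precisely because of the inclusion $\ran M\subset\cran(X+G)$ built into \eqref{fg2}, which prevents the kernel of $X+G$ from interfering; granting this, the rest is routine operator manipulation.
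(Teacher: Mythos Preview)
Your proof is correct. Both routes you give are sound: the first via the closed pseudo-inverse formula $X:G=X-\bl((X+G)^{-1/2}X\br)^*\bl((X+G)^{-1/2}X\br)$ together with the identification $(X+G)^{-1/2}X=MS^{1/2}$, and the second via the resolvent limit. In each case the essential point is exactly the one you isolate, namely that $\ran M\subset\cran(X+G)$ keeps $\ker(X+G)$ from causing trouble; your handling of this is accurate.

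There is nothing to compare against in the paper itself: Lemma~\ref{yu1} is simply quoted from \cite{Ar2} without proof. Your argument uses only formulas already recorded in the paper's introduction (the pseudo-inverse expression for $X:G$ and the resolvent definition), so it is self-contained relative to the surrounding text and in that sense supplies what the paper omits.
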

Since
$$
\ran M^{1/2}\cap\ran (I-M)^{1/2}=\ran (M-M^2)^{1/2},
$$
the next proposition is an immediate consequence of Lemma \ref{yu1}, cf. \cite{FW}, \cite{PSh}.
\begin{proposition}
\label{root} 1) $\ran (X:G)^{1/2}=\ran X^{1/2}\cap\ran G^{1/2}$.

2) The following statements are equivalent:
\begin{enumerate}
\def\labelenumi{\rm (\roman{enumi})}
\item
 $X:G=0$;
 \item
$M^2=M$, i.e., the operator $M$ in \eqref{fg2} is an orthogonal projection in
$\cran(X+G)$;
\item $\ran X^{1/2}\cap\ran G^{1/2}=\{0\}$.
\end{enumerate}
\end{proposition}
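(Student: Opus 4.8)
The plan is to read everything off Lemma~\ref{yu1}, the factorizations~\eqref{fg2}, Douglas' factorization theorem, and the displayed identity $\ran M^{1/2}\cap\ran(I-M)^{1/2}=\ran(M-M^2)^{1/2}$, together with one elementary remark: $(X+G)^{1/2}$ is injective on $\cran(X+G)$, since $\ker(X+G)^{1/2}=\ker(X+G)$ is the orthogonal complement of $\cran(X+G)$.

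For part~1) I would apply Douglas' theorem to the three factorizations $X=(X+G)^{1/2}M(X+G)^{1/2}$, $G=(X+G)^{1/2}(I-M)(X+G)^{1/2}$, and $X:G=(X+G)^{1/2}(M-M^2)(X+G)^{1/2}$ (the last by Lemma~\ref{yu1}), obtaining
\[
\ran X^{1/2}=(X+G)^{1/2}\ran M^{1/2},\qquad \ran G^{1/2}=(X+G)^{1/2}\ran(I-M)^{1/2},
\]
and, via the displayed identity,
\[
\ran(X:G)^{1/2}=(X+G)^{1/2}\ran(M-M^2)^{1/2}=(X+G)^{1/2}\bl(\ran M^{1/2}\cap\ran(I-M)^{1/2}\br).
\]
It then remains to verify that $(X+G)^{1/2}\cL_1\cap(X+G)^{1/2}\cL_2=(X+G)^{1/2}(\cL_1\cap\cL_2)$ for $\cL_1=\ran M^{1/2}$, $\cL_2=\ran(I-M)^{1/2}$; the inclusion $\supseteq$ is free, so only $\subseteq$ has content, and \textbf{this is the one point that genuinely needs an argument}. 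I would handle it by restricting to $\cH_0:=\cran(X+G)$: from $0\le X\le X+G$ and $0\le G\le X+G$ we get $\ker(X+G)\subseteq\ker X\cap\ker G$, while $X:G\le X$ gives $\ker(X+G)\subseteq\ker(X:G)$, so $\ran X^{1/2},\ran G^{1/2},\ran(X:G)^{1/2}\subseteq\cH_0$; also $\ran M\subseteq\cH_0$ forces $M$ to vanish on $\cH_0^{\perp}$ and to map $\cH$ into $\cH_0$. Thus it suffices to prove the identity with $X,G,X+G,M$ replaced by their restrictions to $\cH_0$, where $(X+G)^{1/2}$ is injective and the interchange is immediate; a routine check confirms that $M\uphar\cH_0$ and $I_{\cH_0}-M\uphar\cH_0$ still realize~\eqref{fg2} for $X\uphar\cH_0$, $G\uphar\cH_0$. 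This proves 1), which recovers~\eqref{ukfdyj}.

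For part~2), (i)$\Leftrightarrow$(iii) is immediate from 1) since $Y=0\iff\ran Y^{1/2}=\{0\}$ for $Y\in\bB^+(\cH)$. For (i)$\Leftrightarrow$(ii) I would argue directly from Lemma~\ref{yu1}: putting $N:=(M-M^2)^{1/2}\ge 0$ we have $X:G=\bl((X+G)^{1/2}N\br)\bl((X+G)^{1/2}N\br)^{*}$, so $X:G=0$ iff $(X+G)^{1/2}N=0$; since $\ran N\subseteq\cran(M-M^2)\subseteq\cran M\subseteq\cran(X+G)$ and $(X+G)^{1/2}$ is injective on $\cran(X+G)$, this holds iff $N=0$, i.e. iff $M^2=M$. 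As a self-adjoint idempotent $M$ is then an orthogonal projection, and $\ran M\subseteq\cran(X+G)$ makes it a projection in $\cran(X+G)$ --- exactly statement (ii).

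Apart from the kernel reduction used to push the intersection through $(X+G)^{1/2}$ in part~1), and the attendant check that the restricted data still satisfies~\eqref{fg2}, the whole proposition is bookkeeping with Douglas' theorem, the identity $\ran(M-M^2)^{1/2}=\ran M^{1/2}\cap\ran(I-M)^{1/2}$, and the description of $\ker(X+G)^{1/2}$. I expect that reduction to be the only place requiring care.
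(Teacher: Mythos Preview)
Your proof is correct and follows exactly the route the paper indicates: the paper merely states that the proposition is an immediate consequence of Lemma~\ref{yu1} together with the identity $\ran M^{1/2}\cap\ran(I-M)^{1/2}=\ran(M-M^2)^{1/2}$, and you have faithfully unpacked that sentence via Douglas' theorem. Your explicit reduction to $\cH_0=\cran(X+G)$ to justify pushing the intersection through $(X+G)^{1/2}$ is a detail the paper leaves implicit, so your write-up is in fact more complete than the original.
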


 Fix $G\in\bB^+(\cH)$ and define a mapping
\begin{equation}
\label{mapmu}
\bB^+(\cH)\ni X\mapsto\mu_G(X)\stackrel{def}{=}X-X:G\in \bB^+(\cH).
\end{equation}
Then
\begin{enumerate}
\item $0\le\mu_G(X)\le X$,
\item $\mu_G(X)=X\iff X:G=0\iff\ran X^{1/2}\cap \ran G^{1/2}=\{0\}$.
\end{enumerate}
Therefore, if $G$ is positive definite, then the set of fixed points of $\mu_G$ consists of a unique element, the trivial operator.
Denote by $\mu^{[n]}_G$ the $n$th iteration of the mapping $\mu_G$, i.e., for $X\in\bB^+(\cH)$
\begin{multline*}
\mu^{[2]}_G(X)=\mu_G(\mu_G(X)),\;\mu^{[3]}_G(X)=\mu_G(\mu^{[2]}_G(X)),\cdots,\\
\mu^{[n]}_G(X)=\mu_G(\mu^{[n-1]}_G(X)).
\end{multline*}
Since
\[
X\ge\mu_G(X)\ge \mu^{[2]}_G(X)\ge\cdots\ge\mu^{[n]}_G(X)\ge \cdots,
\]
the strong limit of $\{\mu^{[n]}_G(X)\}_{n=0}^\infty$ exists for an arbitrary $X\in\bB^+(\cH)$ and is an operator from $\bB^+(\cH)$.
In this paper we study the mapping
\[
\bB^+(\cH)\ni X\mapsto\tau_G(X)\stackrel{def}{=}s-\lim\limits_{n\to\infty}\mu^{[n]}_G(X)\in\bB^+(\cH).
\]
We show that the range and the set of fixed points of $\tau_G$ coincides with the cone
\begin{multline*}
\bB^+_G(\cH)=\left\{Y\in\bB^+(\cH): \ran Y^{1/2}\cap\ran G^{1/2}=\{0\}\right\}\\
=\left\{Y\in \bB^+(\cH), \;Y:G=0\right\}.
\end{multline*}
We find explicit expressions for $\tau_G$ and establish  its properties. In particular, we show that $\tau_G$ is  homogenous and sub-additive, i.e., $\tau_G(\lambda X)=\lambda\tau_G(X)$ and
$\tau_G(X+Y)\le \tau_G(X)+\tau_G(Y)$ for an arbitrary operators $X, Y\in\bB^+(\cH)$ and an arbitrary positive number $\lambda$. It turns out that
$$\tau_G(X)=\tau_{\wt G}(X)=\tau_G(\wt G+X)$$ for all $X\in\bB^+(\cH)$, where $\wt G\in\bB^+(\cH)$ is an arbitrary operator such that $\ran \wt G^{1/2}=\ran G^{1/2}$. We prove the equality  $\tau_G(X)=X-[G]X,$
where the mapping
\[
\bB^+(\cH)\ni X\mapsto[G]X\stackrel{def}{=}s-\lim\limits_{n\to\infty}(nG:X)\in\bB^+(\cH)
\]
has been defined and studied by T.~Ando \cite{Ando_1976} and then in \cite{Pek_1978}, \cite{Kosaki_1984}, and \cite{E-L}. In the last Section \ref{applll} we apply the mappings $\{\mu^{[n]}_G\}$ and $\tau_G$  to the problem of the existence of a self-adjoint operator
whose domain has trivial intersection with the domain of given unbounded self-adjoint operator \cite{Neumann}, \cite{Dix}, \cite{FW}, \cite{ES}. Given an unbounded self-adjoint operator $A$, in Theorem \ref{ytcgjl} we suggest several assertions equivalent to the existence of a unitary operator $U$ possessing the property $U\dom A\cap\dom A=\{0\}$. J.~von Neumann \cite[Satz 18]{Neumann} established that such $U$ always exists for an arbitrary unbounded self-adjoint $A$ acting in a separable Hilbert space. In a nonseparable Hilbert space  always exists an unbounded self-adjoint operator $A$ such that for any unitary $U$ the relation $U\dom A\cap\dom\ne\{0\}$ holds, see \cite{ES}.
\section{The mapping $\mu_G$ and strong limits of its orbits}

\begin{lemma}
\label{vspm}
Let $F_0\in\bB^+(\cH)$. Define the orbit
\[
F_1=\mu_G(F_0),\; F_2=\mu_G(F_1),\ldots,F_{n+1}=\mu_G(F_n),\ldots.
\]
Then the sequence $\{F_n\}$ is non-increasing:
$$F_0\ge F_1\ge\cdots \ge F_n\ge F_{n+1}\ge\cdots,$$
and the strong limit
\[
F\stackrel{def}{=}s-\lim\limits_{n\to\infty} F_n
\]
is a fixed point of $\mu_G$, i.e.,
satisfies the condition
\[
F:G=0.
\]
\end{lemma}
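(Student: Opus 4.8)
The plan is to exploit the telescoping structure already visible in the statement. Since $F_{n+1}=\mu_G(F_n)=F_n-F_n:G$, we have $F_n:G=F_n-F_{n+1}\ge 0$ for every $n$, and the sequence $\{F_n\}$ is non-increasing and bounded below by $0$, hence converges strongly to some $F\in\bB^+(\cH)$ (as already noted in the excerpt and following from monotone convergence of self-adjoint operators). The monotonicity $F_0\ge F_1\ge\cdots$ is immediate from property (1) of $\mu_G$, namely $0\le\mu_G(X)\le X$, applied repeatedly. So the only real content is the identity $F:G=0$.

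The key step is to show $F_n:G\to 0$ strongly. From the telescoping identity, for any $h\in\cH$ we have $\sum_{n=0}^{N}\bigl((F_n:G)h,h\bigr)=(F_0h,h)-(F_{N+1}h,h)$, which is bounded above by $(F_0h,h)$ uniformly in $N$. Hence the series $\sum_{n=0}^\infty\bigl((F_n:G)h,h\bigr)$ converges, and in particular $\bigl((F_n:G)h,h\bigr)\to 0$ for every $h$. Since each $F_n:G\in\bB^+(\cH)$, this means $F_n:G\to 0$ in the strong sense (for nonnegative operators, $(\,\cdot\,h,h)\to 0$ for all $h$ is equivalent to strong convergence to $0$, via $\|(F_n:G)^{1/2}h\|^2\to 0$ and uniform boundedness).

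It remains to pass to the limit: I want to conclude $F:G=0$ from $F_n\downarrow F$ and $F_n:G\to 0$. Here I would invoke the monotone continuity of the parallel sum, \eqref{monotcon}: since $F_n\downarrow F$ strongly and the constant sequence $G\downarrow G$ trivially, we get $F_n:G\downarrow F:G$ strongly. But we have just shown $F_n:G\to 0$, so $F:G=0$ by uniqueness of strong limits. Finally, by Proposition \ref{root}(2), $F:G=0$ is equivalent to $\ran F^{1/2}\cap\ran G^{1/2}=\{0\}$, and it is also exactly the condition $\mu_G(F)=F$ (property (2) of $\mu_G$), so $F$ is indeed a fixed point of $\mu_G$.

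The main obstacle, such as it is, is making sure the limit interchange is rigorous: one must cite \eqref{monotcon} to guarantee $F_n:G$ converges to $F:G$ rather than to something merely $\le F:G$. The telescoping bound handles the quantitative decay of $F_n:G$ cleanly, so no delicate estimate is needed beyond the monotone convergence machinery already available; everything else is bookkeeping with the order relation on $\bB^+(\cH)$.
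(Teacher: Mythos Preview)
Your proof is correct and follows essentially the same approach as the paper: both use the telescoping identity $F_n:G=F_n-F_{n+1}$ together with \eqref{monotcon} to conclude $F:G=0$. The only difference is cosmetic: to show $F_n:G\to 0$ you pass through summability of the quadratic forms, whereas the paper simply observes that $F_n-F_{n+1}\to F-F=0$ strongly since both $F_n$ and $F_{n+1}$ converge to $F$.
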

\begin{proof} Since $\mu_G(X)\le X$ for all $X\in\bB^+(\cH)$, the sequence $\{F_n\}$ is non-increasing.
Therefore, there exists  a strong limit $F=s-\lim\limits_{n\to\infty} F_n.$
On the other hand, because the sequence $\{F_n\}$ in non-increasing,
the sequence $\{F_n:G\}$ is non-increasing as well and property \eqref{monotcon} of parallel addition leads to
\[
s-\lim\limits_{n\to\infty}(F_n:G)=F:G.
\]
Besides, the equalities
\[
F_n:G=F_n-F_{n+1}, \;n=0,1,\ldots
\]
yield $F:G=0.$
 Thus, $F=\mu_G(F)$, i.e., $F$ is a fixed point of the
mapping $\mu_G$.
\end{proof}

For $G,F_0\in\bB^+(\cH)$ define subspaces
\begin{equation}
\label{prosm}\begin{array}{l}
\Omega\stackrel{def}{=}{\rm{clos}}\left\{f\in\cH:(G+F_0)^{1/2}f\in\ran G^{1/2}\right\}, \\
 \sM\stackrel{def}{=}\cH\ominus\Omega.
 \end{array}
\end{equation}
Note that if a linear operator $\cV$ is defined by
\begin{equation}
\label{contrv}
\left\{\begin{array}{l}x=(G+F_0)^{1/2}f+g\\
\cV x=G^{1/2}f,\; f\in\cH,\; g\in\ker(G+F_0)
\end{array}
\right.,
\end{equation}
then $\dom \cV=\ran(G+F_0)^{1/2}\oplus\ker(G+F_0)$ is a dense in $\cH$ linear manifold and $\cV$ is a contraction. Let $\overline{\cV}$ be the continuation of $\cV$ on $\cH$. Clearly $\overline{\cV}=\cV^{**}$.
If we denote by $(G+F_0)^{-1/2}$ the Moore-Penrose pseudo-inverse to $(G+F_0)^{1/2}$, then from \eqref{contrv} one can get that
\begin{equation}
\label{opercv}
\begin{array}{l}
\cV(G+F_0)^{1/2}=G^{1/2}=(G+F_0)^{1/2}\cV^*,\\
\cV^*=(G+F_0)^{-1/2}G^{1/2},\;\ran \cV^*\subseteq\cran(G+F_0),\\
\cV g=G^{1/2}(G+F_0)^{-1/2}g,\; g\in\ran(G+F_0)^{1/2}.
\end{array}
\end{equation}
Moreover,
\begin{equation}
\label{11}
\Omega=\cran \cV^*\oplus\ker(G+F_0), \; \sM=\ker \left(\overline{\cV}\uphar\cran(G+F_0)\right).
\end{equation}
Besides we define the following contractive linear operator
\begin{equation}
\label{contrw}
\left\{\begin{array}{l}x=(G+F_0)^{1/2}f+g\\
\cW x=F_0^{1/2}f,\; f\in\cH,\; g\in\ker(G+F_0).
\end{array}
\right.
\end{equation}
The operator $\cW$ is defined on $\dom \cW=\ran(G+F_0)^{1/2}\oplus\ker(G+F_0)$ and
\begin{equation}
\label{opwa}
\begin{array}{l}
\cW(G+F)^{1/2}=F^{1/2}_0=(G+F_0)^{1/2}\cW^*,\\
\cW^*=(G+F_0)^{-1/2}F^{1/2}_0,\;\ran \cW^*\subseteq\cran (G+F_0),\\
\cW h=F^{1/2}_0(G+F_0)^{-1/2}h,\; h\in\ran (G+F_0)^{1/2}.
\end{array}
\end{equation}
 Let $\overline\cW=\cW^{**}$ be the continuation of $\cW$ on $\cH$. Clearly, $\overline\cW^*=\cW^*.$
 Note that
 \[
 \cV^*\overline\cV h+\cW^*\overline \cW h=h,\;h\in\cran(G+F_0)
 \]
Set
\begin{equation}
\label{prosn}
\sN\stackrel{def}{=}\ker(I-\overline{\cW}\,\overline{\cW}^*).
\end{equation}
Since $\ker \cW^*=\ker F_0$, the subspace $\sN$ is contained in $\cran F_0$.
\begin{proposition}
\label{singar}
The equalities
\begin{multline}
\label{equival1}
\ran (I-\overline{\cW}\,\overline{\cW}^*)^{1/2}=\left\{f\in \cH: F^{1/2}_0f\in\ran G^{1/2}\right\}\\
=\left\{f\in \cH: F^{1/2}_0f\in\ran (F:G_0)^{1/2}\right\}
\end{multline}
hold.
\end{proposition}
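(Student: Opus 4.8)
The plan is to reduce both equalities in \eqref{equival1} to the single operator identity
\[
F_0:G=F_0^{1/2}\bigl(I-\overline{\cW}\,\overline{\cW}^{*}\bigr)F_0^{1/2},
\]
after which only soft arguments remain. (Here, and in the displayed statement, $F_0:G$ denotes the parallel sum; the symbol written ``$F:G_0$'' in \eqref{equival1} is evidently a misprint for $F_0:G$.) To obtain this identity I would start from the formula $X:G=X-\bigl((X+G)^{-1/2}X\bigr)^{*}\bigl((X+G)^{-1/2}X\bigr)$ recalled in the Introduction, taken with $X=F_0$, and write $F_0=F_0^{1/2}F_0^{1/2}$. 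By \eqref{opwa} we have $\overline{\cW}^{*}=\cW^{*}=(G+F_0)^{-1/2}F_0^{1/2}$, a bounded contraction (indeed $\ran F_0\subseteq\ran F_0^{1/2}\subseteq\ran (G+F_0)^{1/2}$, so Douglas' theorem applies); hence $(G+F_0)^{-1/2}F_0=\overline{\cW}^{*}F_0^{1/2}$ and therefore $\bigl((G+F_0)^{-1/2}F_0\bigr)^{*}\bigl((G+F_0)^{-1/2}F_0\bigr)=F_0^{1/2}\,\overline{\cW}\,\overline{\cW}^{*}\,F_0^{1/2}$. Substituting this into the formula for $F_0:G$ and factoring out $F_0^{1/2}$ gives the identity.

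Next I would invoke the Douglas factorization fact from the Introduction (if $C=ABA$ then $\ran C^{1/2}=A\,\ran B^{1/2}$) with $A=F_0^{1/2}$ and $B=I-\overline{\cW}\,\overline{\cW}^{*}\ge 0$ (nonnegative since $\overline{\cW}$ is a contraction), obtaining
\[
\ran (F_0:G)^{1/2}=F_0^{1/2}\,\ran \bigl(I-\overline{\cW}\,\overline{\cW}^{*}\bigr)^{1/2}.
\]
Consequently, for $f\in\cH$ the condition $F_0^{1/2}f\in\ran (F_0:G)^{1/2}$ is equivalent to $f\in\ran (I-\overline{\cW}\,\overline{\cW}^{*})^{1/2}+\ker F_0^{1/2}$. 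To collapse this sum I would verify $\ker F_0^{1/2}\subseteq\ran (I-\overline{\cW}\,\overline{\cW}^{*})^{1/2}$: since $\ker\overline{\cW}^{*}=\ker\cW^{*}=\ker F_0=\ker F_0^{1/2}$ (as noted just after \eqref{prosn}), every $k\in\ker F_0$ satisfies $(I-\overline{\cW}\,\overline{\cW}^{*})k=k$, whence $k=(I-\overline{\cW}\,\overline{\cW}^{*})^{1/2}\bigl((I-\overline{\cW}\,\overline{\cW}^{*})^{1/2}k\bigr)$ lies in that range. This establishes
\[
\ran \bigl(I-\overline{\cW}\,\overline{\cW}^{*}\bigr)^{1/2}=\bigl\{f\in\cH:F_0^{1/2}f\in\ran (F_0:G)^{1/2}\bigr\},
\]
which is the second equality of \eqref{equival1}.

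For the first equality I would use Proposition~\ref{root}(1): $\ran (F_0:G)^{1/2}=\ran F_0^{1/2}\cap\ran G^{1/2}$. Since $F_0^{1/2}f$ automatically belongs to $\ran F_0^{1/2}$, the condition $F_0^{1/2}f\in\ran (F_0:G)^{1/2}$ is the same as $F_0^{1/2}f\in\ran G^{1/2}$, and all of \eqref{equival1} follows.

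The only genuinely delicate point is the opening identity $F_0:G=F_0^{1/2}(I-\overline{\cW}\,\overline{\cW}^{*})F_0^{1/2}$: one must be careful that $(G+F_0)^{-1/2}F_0$ really is a bounded operator and that the adjoint computation is legitimate even though $(G+F_0)^{-1/2}$ is in general unbounded. This is precisely what \eqref{opwa} supplies, $\cW^{*}=(G+F_0)^{-1/2}F_0^{1/2}$ being identified there as a contraction; with that bookkeeping in place, everything else is formal. An alternative route to the same identity runs through Lemma~\ref{yu1}, after checking that $\cW^{*}\overline{\cW}$ coincides on $\cran (G+F_0)$ with the contraction $M$ of \eqref{fg2} attached to the pair $F_0,G$ and using $\cW(G+F_0)^{1/2}=F_0^{1/2}$.
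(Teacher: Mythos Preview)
Your argument is correct and a bit more direct than the paper's. The paper works with both contractions $\cV$ and $\cW$: it rewrites $F_0^{1/2}f=G^{1/2}h$ as $\cW^{*}f=\cV^{*}h$, identifies $\ran\cV^{*}=\ran(I_{\cH_0}-M_0)^{1/2}$ with $M_0=\overline{\cW}^{*}\overline{\cW}\uphar\cH_0$, and then passes (implicitly using the standard defect–operator relation for a contraction) from $\cW^{*}f\in\ran(I-\overline{\cW}^{*}\overline{\cW})^{1/2}$ to $f\in\ran(I-\overline{\cW}\,\overline{\cW}^{*})^{1/2}$. Your route avoids $\cV$ and $M_0$ altogether: the identity $F_0:G=F_0^{1/2}(I-\overline{\cW}\,\overline{\cW}^{*})F_0^{1/2}$, obtained straight from the Pekarev--Shmul'yan formula and \eqref{opwa}, together with the Douglas range equality $\ran C^{1/2}=A\ran B^{1/2}$ for $C=ABA$, gives the result after the easy observation $\ker F_0\subseteq\ran(I-\overline{\cW}\,\overline{\cW}^{*})^{1/2}$. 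Your approach is more self-contained for this proposition, while the paper's setup with $M_0$ pays off later in the proof of Theorem~\ref{form1}, where the iterates $M_n$ are computed explicitly.
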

\begin{proof}
Set $\cH_0\stackrel{def}{=}\cran(G+F_0)$. Note that $\ker (G+F_0)=\ker G\cap\ker F_0$.
Define
\begin{equation}
\label{mo}
M_0\stackrel{def}{=}\overline\cW^*\overline\cW\uphar\cH_0.
\end{equation}
Then $M_0\in\bB^+(\cH_0)$ and
\begin{equation}
\label{cvop}
\overline{\cV}^*\overline{\cV}\uphar=I_{\cH_0}-M_0=I_{\cH_0}-\overline\cW^*\overline\cW\uphar\cH_0.
\end{equation}
From \eqref{opercv} and \eqref{opwa}
\begin{multline}
\label{equiv11}
F^{1/2}_0f=G^{1/2}h\iff (G+F_0)^{1/2}\cW^*f=(G+F_0)^{1/2}\cV^* h\\
\iff \cW^*f=\cV^* h
\end{multline}
Equality \eqref{cvop} yields
\[
\ran \cV^*=\ran (I_{\cH_0}-\overline\cW^*\overline\cW\uphar\cH_0)^{1/2}
\]
Hence \eqref{equiv11} is equivalent to the inclusion $f\in \ran (I-\overline \cW\overline\cW^*)^{1/2}.$ Application of \eqref{ukfdyj} completes the proof.
\end{proof}
Thus from \eqref{prosn} and \eqref{equiv11} we get
\begin{equation}
\label{nob1}
\sN=\cH\ominus{\left\{{\rm clos}\left\{g\in\cH:F^{1/2}_0g\in\ran G^{1/2}\right\}\right\}}.
\end{equation}

\begin{theorem}
\label{form1}
Let $G\in\bB^+(\cH)$, $F_0\in\bB^+(\cH)$, $F_n\stackrel{def}{=}\mu_G(F_{n-1})$, $n\ge 1$,  $F\stackrel{def}{=}s-\lim_{n\to \infty}F_n$.
Then
\begin{equation}
\label{prosm1}
F=(G+F_0)^{1/2}P_\sM(G+F_0)^{1/2}
\end{equation}
and
\begin{equation}
\label{prosn1} F=F^{1/2}_0P_\sN F^{1/2}_0,
\end{equation}
where $\sM$ and $\sN$ are given by \eqref{prosm} and \eqref{nob1}, respectively.
\end{theorem}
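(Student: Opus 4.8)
The plan is to compute the strong limit $F$ directly using the factorization machinery introduced before the theorem. The key observation is that $\mu_G$ preserves the "container" $G+F_0$ in the sense that $F_n \le G + F_0$ for all $n$ (since $F_n \le F_0 \le G+F_0$), so by the representation \eqref{fg2} applied to the pair $(G, F_n)$ — or better, by tracking a single operator on $\cH_0 = \cran(G+F_0)$ — each $F_n$ can be written as $F_n = (G+F_0)^{1/2} R_n (G+F_0)^{1/2}$ for a suitable non-negative contraction $R_n$ on $\cH_0$, with $R_0 = \overline\cW^*\overline\cW\uphar\cH_0 = M_0$. The first task is to find the recursion satisfied by $R_n$. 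Writing $G = (G+F_0)^{1/2}(I_{\cH_0}-M_0)(G+F_0)^{1/2}$ via \eqref{cvop}, and applying the transfer formula \eqref{trans} (whose hypothesis $\ker(G+F_0)^{1/2}\cap\ran(G+F_0)^{1/2}=\{0\}$ on $\cH_0$ holds trivially, modulo a density/closure argument using \eqref{monotcon}), Lemma \ref{yu1} should give $F_n : G = (G+F_0)^{1/2}\bigl(R_n : (I_{\cH_0}-M_0)\bigr)(G+F_0)^{1/2}$, hence
\[
R_{n+1} = R_n - R_n : (I_{\cH_0} - M_0) = \mu_{I_{\cH_0}-M_0}(R_n) \quad \text{on } \cH_0 .
\]
So the problem reduces to identifying $R := s\text{-}\lim R_n$, the strong limit of the $\mu$-iterates of $M_0$ with respect to the complementary operator $I_{\cH_0}-M_0$.

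Next I would analyze this reduced problem. By Lemma \ref{vspm}, $R$ is a fixed point: $R : (I_{\cH_0}-M_0) = 0$, i.e.\ $\ran R^{1/2} \cap \ran(I_{\cH_0}-M_0)^{1/2} = \{0\}$. Since $R \le M_0$ and $R_n \downarrow R$, one also has $\ran R^{1/2} \subseteq \ran M_0^{1/2} = \ran \overline\cW^*$. The claim $F = (G+F_0)^{1/2}P_\sM(G+F_0)^{1/2}$ amounts to proving $R = P_\sM\uphar\cH_0$. To see $R \le P_\sM$: by \eqref{11} and \eqref{cvop}, $\sM = \ker(\overline\cV\uphar\cH_0) = \ker(I_{\cH_0}-M_0)^{1/2}$, so $I_{\cH_0} - P_\sM$ is the orthogonal projection onto $\cran(I_{\cH_0}-M_0)^{1/2}$; since $\ran R^{1/2}$ meets $\ran(I_{\cH_0}-M_0)^{1/2}$ trivially, a standard argument (e.g.\ via the parallel sum $R : (I_{\cH_0}-P_\sM)$, which must vanish because $\ran(I_{\cH_0}-P_\sM) \supseteq \ran(I_{\cH_0}-M_0)^{1/2}$ and monotonicity of parallel addition forces $R:(I_{\cH_0}-P_\sM)=0$ once one checks $R \le$ something dominating $P_\sM$ appropriately) gives $\ran R^{1/2}\subseteq\ran P_\sM=\sM$, hence $R = P_\sM R P_\sM$. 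For the reverse inequality $R \ge P_\sM\uphar\cH_0$, I would show each $R_n \ge P_\sM$: this holds for $R_0 = M_0$ because $M_0 P_\sM = P_\sM$ (as $\sM \subseteq \ker(I_{\cH_0}-M_0)$, so $M_0$ acts as identity on $\sM$), and it is preserved by the recursion since if $R_n \ge P_\sM$ then $R_n : (I_{\cH_0}-M_0) \le R_n$ but one needs $R_n - R_n:(I_{\cH_0}-M_0) \ge P_\sM$; here the point is that $P_\sM : (I_{\cH_0}-M_0) = 0$ (disjoint ranges again) together with subadditivity/monotonicity of $\mu$ on the ordered cone yields $\mu_{I_{\cH_0}-M_0}(R_n) \ge \mu_{I_{\cH_0}-M_0}(P_\sM) = P_\sM$. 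Passing to the limit gives $R \ge P_\sM\uphar\cH_0$, and combining with $R \le P_\sM\uphar\cH_0$ proves \eqref{prosm1}.

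Finally, \eqref{prosn1} should follow from \eqref{prosm1} by a change of the factoring operator. The identity $F^{1/2}_0 = (G+F_0)^{1/2}\cW^*$ from \eqref{opwa} gives $F = (G+F_0)^{1/2}P_\sM(G+F_0)^{1/2}$, and I want to rewrite $(G+F_0)^{1/2}P_\sM(G+F_0)^{1/2}$ as $F^{1/2}_0 P_\sN F^{1/2}_0$. Using $\overline\cW(G+F_0)^{1/2} = F^{1/2}_0$ on an appropriate domain and the description \eqref{nob1} of $\sN$ — namely $\sN = \cH \ominus \mathrm{clos}\{g : F^{1/2}_0 g \in \ran G^{1/2}\}$, which by Proposition \ref{singar} equals $\ker(I-\overline\cW\,\overline\cW^*) = \sN$ as in \eqref{prosn} — one checks that $\sM$ and $\sN$ correspond under the partial isometry part of $\overline\cW$: more precisely, $P_\sM$ restricted to $\cran(G+F_0)$ should satisfy $(G+F_0)^{1/2}P_\sM(G+F_0)^{1/2} = F^{1/2}_0 P_\sN F^{1/2}_0$ because both sides have square-root range equal to $F^{1/2}_0 \sM = \ran F^{1/2}_0 \ominus (\text{closure of } F^{1/2}_0(\text{part of }\cH_0\text{ where }G\text{ "sees" }F_0))$, matching the definition of $\sN$ via \eqref{nob1} and \eqref{equiv11}. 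The main obstacle I anticipate is the careful handling of closures and pseudo-inverses in the transfer formula \eqref{trans} and in the identification $\ran R^{1/2} = F^{1/2}_0\sN$ — the algebraic skeleton is clean, but verifying that the hypothesis of \eqref{trans} holds at each iteration (or circumventing it by the $\varepsilon$-regularization in \eqref{monotcon}) and that the range identities survive the passage to the strong limit will require the kind of Douglas-lemma and range-closure bookkeeping that is easy to get wrong.
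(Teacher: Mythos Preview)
Your overall strategy --- factor through $(G+F_0)^{1/2}$, reduce to the iteration $R_{n+1}=\mu_{I-M_0}(R_n)$ on $\cH_0$ with $R_0=M_0$, and identify the limit $R$ with $P_\sM$ --- is exactly the paper's set-up. But both halves of your identification of $R$ have genuine gaps. For the lower bound $R\ge P_\sM$ you appeal to monotonicity of $\mu$: ``$\mu_{I-M_0}(R_n)\ge\mu_{I-M_0}(P_\sM)$''. The map $X\mapsto\mu_G(X)=X-X:G$ is \emph{not} order-preserving; already for $G=I$ on $\dC^2$ one finds $X_1\le X_2$ with $\mu_I(X_1)\not\le\mu_I(X_2)$. (The conclusion $R_n\ge P_\sM$ is nevertheless true: since $\ran\bigl(R_n:(I-M_0)\bigr)^{1/2}\subseteq\ran(I-M_0)^{1/2}\subseteq\sM^\perp$, the operator $R_n:(I-M_0)$ vanishes on $\sM$, so $R_n\uphar\sM=M_0\uphar\sM=I_\sM$ for all $n$ by induction, and hence $R_nP_\sM=P_\sM$.)

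The more serious problem is the upper bound. From Lemma~\ref{vspm} you only get $R:(I-M_0)=0$, i.e.\ $\ran R^{1/2}\cap\ran(I-M_0)^{1/2}=\{0\}$. Since $\ran(I-M_0)^{1/2}$ is in general only \emph{dense} in $\sM^\perp$, this does not force $\ran R^{1/2}\subseteq\sM$. Your proposed route via $R:(I-P_\sM)$ goes the wrong way: from $(I-M_0)\le(I-P_\sM)$ one gets $0=R:(I-M_0)\le R:(I-P_\sM)$, which is vacuous. The paper closes this gap by a different, explicitly algebraic argument: it proves by induction that each $M_n$ \emph{commutes} with $M_0$ and that $I-M_0+M_n$ is positive definite, so that the recursion takes the closed form $M_{n+1}(I-M_0+M_n)=M_n^2$; passing to the strong limit yields $(I-M_0)M=0$, which is the strictly stronger statement $\ran M\subseteq\ker(I-M_0)=\sM$. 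Without establishing this commutativity (or something equivalent), your fixed-point information is too weak to pin down $R$. The derivation of \eqref{prosn1} from \eqref{prosm1} via the partial isometry of $\overline\cW$ is the same idea the paper carries out, and your sketch there is fine once \eqref{prosm1} is in hand.
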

\begin{proof}
From \eqref{contrw}, \eqref{contrw}, \eqref{mo}, \eqref{cvop}, \eqref{11} we have
\[
\begin{array}{l}
F_0=(G+F_0)^{1/2}M_0(G+F_0)^{1/2},\\
 G=(G+F_0)^{1/2}(I_{\cH_0}-M_0)(G+F_0)^{1/2},
\end{array}
\]
\[
\ker (I_{\cH_0}-M_0)=\sM,\;\cran (I-M_0)=\cH_0\ominus\sM=\Omega\ominus\ker (G+F_0).
\]
Then by Lemma \ref{yu1}
\begin{multline*}
F_0:G=(G+F_0)^{1/2}(M_0:(I_{\cH_0}-M_0))(G+F_0)^{1/2}\\
=(G+F_0)^{1/2}(M_0-M^2_0)(G+F_0)^{1/2}.
\end{multline*}
It follows that
\[
F_1=\mu_G(F_0)=F_0-F_0:G=(G+F_0)^{1/2}M^2_0(G+F_0)^{1/2}.
\]
Then (further $I=I_{\cH_0}$ is the identity operator) from \eqref{trans}
\begin{multline*}
F_1:G=(G+F_0)^{1/2}\left((I-M_0):M^2_0\right)(G+F_0)^{1/2}\\=(G+F_0)^{1/2}\left((I-M_0)M^2_0(I-M_0+M^2_0)^{-1}\right)(G+F_0)^{1/2},
\end{multline*}
\begin{multline*}
F_2\stackrel{def}{=}\mu_G(F_1)=F_1-F_1:G\\
=(G+F_0)^{1/2}\left(M_0^2-(I-M_0)M^2_0(I-M_0+M^2_0)^{-1}\right)(G+F_0)^{1/2}\\
=(G+F_0)^{1/2}M^4_0(I-M_0+M^2_0)^{-1}(G+F_0)^{1/2}.
\end{multline*}
Let us show by induction that for all $n\in\dN$
$$F_n\stackrel{def}{=}\mu_G(F_{n-1})=(G+F_0)^{1/2}M_n(G+F_0)^{1/2}\quad\mbox{for all} \quad n\in\dN,$$
 where
\begin{enumerate}
\item $\{M_n\}$ is a non-increasing sequence from $\bB^+(\cH_0)$,
\item $I-M_0+M_n$ is positive definite,
\item $M_n$ commutes with $M_0$,
\item $M_{n+1}=(I-M_0+M_n)^{-1}M^2_n.$
\end{enumerate}
All statements are already established for $n=1$ and for $n=2$. Suppose that all statements are valid for some $n$.
Further, using the equality $M_0M_n=M_nM_0$, we have
\begin{multline*}
I-M_0+M_{n+1}=I-M_0+(I-M_0+M_n)^{-1}M^2_n\\
=(I-M_0+M_n)^{-1}\left((I-M_0+M_n)(I-M_0)+M^2_n\right)\\
=(I-M_0+M_n)^{-1}\left((I-M_0)^2+M_n(I-M_0)+M^2_n\right)\\
=(I-M_0+M_n)^{-1}\left(\left((I-M_0)+\frac{1}{2}M_n\right)^2+\frac{3}{4}M^2_n\right).
\end{multline*}
Since
\[
(I-M_0)+\frac{1}{2}M_n\ge \frac{1}{2}\left(I-M_0+M_n\right),
\]
and $I-M_0+M_n$ is positive definite, we get that the operator $I-M_0+M_{n+1}$ is positive definite.
\begin{multline*}
M_0M_{n+1}=M_0(I-M_0+M_n)^{-1}M^2_n\\
=(I-M_0+M_n)^{-1}M^2_n M_0=M_{n+1}M_0.
\end{multline*}
From \eqref{trans} we have
\begin{multline*}
F_{n+1}:G=(G+F_0)^{1/2}\left((I-M_0):M_{n+1}\right)(G+F_0)^{1/2}\\
=(G+F_0)^{1/2}(I-M_0)M_{n+1}(I-M_0+M_{n+1})^{-1}(G+F_0)^{1/2},
\end{multline*}
and
\begin{multline*}
F_{n+2}=\mu_G(F_{n+1})=F_{n+1}-F_{n+1}:G\\
=(G+F_0)^{1/2}\left(M_{n+1}-(I-M_0)M_{n+1}(I-M_0+M_{n+1})^{-1}\right)(G+F_0)^{1/2}\\
=(G+F_0)^{1/2}(I-M_0+M_{n+1})^{-1}M^2_{n+1}(G+F_0)^{1/2}\\
=(G+F_0)^{1/2}M_{n+2}(G+F_0)^{1/2}.
\end{multline*}
One can prove by induction that inequality $I-M_n\ge 0$ and the equalities $M_{n+1}=(I-M_0+M_n)^{-1}M^2_n$ for all $n\in\dN$
imply
$$\ker(I-M_n)=\ker(I-M_0),\;n\in\dN.$$

Let $M=\lim\limits_{n\to\infty} M_n$. Then $F=(G+F_0)^{1/2}M(G+F_0)^{1/2}$. Since $M_{n+1}(I-M_0+M_{n})=M^2_n,$
we get
$(I-M_0)M=0$. Thus, $\ran M\subseteq\ker(I-M_0).$ Since $M\uphar\ker(I-M_0)=I$, we get
$M=P_{\ker(I-M_0)}.$ It follows that \eqref{prosm1} holds true.

The inequalities $0\le \mu_G(X)\le X$ yield $F_n=F^{1/2}_0N_nF^{1/2}_0,$
where $\{N_n\}$ is non-increasing sequence from $\bB^+(\cH)$, $0\le N_n\le I$ for all $n\in\dN$, and $\ker N_n\supseteq\ker F_0$. Let $ N=s-\lim_{n\to \infty}N_n$. Then
$F=F^{1/2}_0 NF^{1/2}_0$.
From \eqref{contrw} we have
\[
F^{1/2}_0=\cW(G+F_0)^{1/2}=(G+F_0)^{1/2}\cW^*,
\]
Since $M_0=\overline\cW^*\overline\cW\uphar\cH_0$ we get and $\overline\cW=VM^{1/2}_0$, where $V$ is isometry from $\cran M_0$ onto $\cran F_0$. Thus
\[
F^{1/2}_0=VM^{1/2}_0(G+F_0)^{1/2},\; M^{1/2}_0(G+F_0)^{1/2}=V^*F^{1/2}_0.
\]
Because $P_{\sM}=M^{1/2}_0P_{\sM} M^{1/2}_0$ we get from $F=(G+F_0)^{1/2}P_\sM(G+F_0)^{1/2}$:
\[
F=F^{1/2}_0VP_{\sM} V^*F^{1/2}_0.
\]
The operator $VP_{\sM} V^*$ is orthogonal projection in $\cran F_0$. Denote $\sN_0=\ran VP_{\sM} V^*=V\ran P_{\sM}.$
From $ (G+F_0)^{1/2}M^{1/2}_0h=F^{1/2}_0Vh$, for all $h\in\cran M_0$ we obtain
\[
(G+F_0)^{1/2}\f=F^{1/2}_0V\f,\; \f\in\sM=\ker(I_{\cH_0}-M_0),
\]
and then
\[
\f=(G+F_0)^{-1/2}F^{1/2}_0V\f.
\]
Hence
\[
(G+F_0)^{-1/2}F^{1/2}_0g=V^*g,\; g=V\f\in\sN_0.
\]
On the other hand
\[
(G+F_0)^{-1/2}F^{1/2}_0x=\overline{\cW}^*x\quad\mbox{for all}\quad x\in\cH.
\]
It follows that $\overline{\cW}^*g=V^*g$ for all $g\in\sN_0$. So
\[
g\in\sN_0\iff ||\overline{\cW}^*g||=||g||\iff g\in\ker (I-\overline{\cW}\,\overline{\cW}^*).
\]
Thus, $\sN_0$ coincides with $\sN$ defined in \eqref{prosn}, and \eqref{prosn1} holds true. \end{proof}
\begin{corollary}\label{commute}
Suppose $F_0$ commutes with $G$. Then $\sN$ defined in \eqref{prosn} takes the form $\sN=\ker G\cap\cran F_0.$
In particular,
\begin{enumerate}
\item if $\ker F_0\supseteq\ker G$, then $F=0$,
\item if $F_0=G$, then $F=0$,
\item  if $\ker G=\{0\}$, then $F=0$.
\end{enumerate}
\end{corollary}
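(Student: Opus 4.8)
The plan is to exploit formula \eqref{prosn1}, namely $F=F_0^{1/2}P_\sN F_0^{1/2}$, together with the commutativity hypothesis. When $F_0$ commutes with $G$, everything in sight becomes amenable to the joint spectral calculus of the commuting pair $\{G,F_0\}$: the operators $(G+F_0)^{1/2}$, $G^{1/2}$, $F_0^{1/2}$ and their Moore--Penrose pseudo-inverses all commute, and hence so do $\cV,\cW$ and their adjoints and closures. First I would identify $\sN$. By \eqref{nob1}, $\sN=\cH\ominus\clos\{g: F_0^{1/2}g\in\ran G^{1/2}\}$, so it suffices to compute the closure of $\cL:=\{g: F_0^{1/2}g\in\ran G^{1/2}\}$. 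Since $G$ and $F_0$ commute, decompose $\cH$ via the joint spectral measure; on the part where $G$ is boundedly invertible, $F_0^{1/2}g$ automatically lies in $\ran G^{1/2}=\cran G$ (as $G^{1/2}$ has closed range there), while on $\ker G$ the condition $F_0^{1/2}g\in\ran G^{1/2}\subseteq\cran G$ forces $F_0^{1/2}g=0$, i.e. $g\in\ker F_0$. A short approximation argument (using that $G^{1/2}(G+\varepsilon I)^{-1/2}$ converges strongly to $P_{\cran G}$) then shows $\clos\cL=\cran G\oplus(\ker G\cap\ker F_0)$, whence $\sN=\cH\ominus\bigl(\cran G\oplus(\ker G\cap\ker F_0)\bigr)=\ker G\cap\cran F_0$, as claimed (note $\ker G\ominus(\ker G\cap\ker F_0)=\ker G\cap\cran F_0$ since $F_0$ and $G$ commute, so $\cran F_0$ reduces $G$).

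With $\sN=\ker G\cap\cran F_0$ in hand, substitute into \eqref{prosn1}. Because $F_0$ commutes with $G$, the projection $P_\sN$ commutes with $F_0^{1/2}$, so $F=F_0^{1/2}P_\sN F_0^{1/2}=F_0 P_\sN=P_\sN F_0$; alternatively $F=(F_0^{1/2}P_\sN)(F_0^{1/2}P_\sN)^*$ with $F_0^{1/2}P_\sN=P_\sN F_0^{1/2}$ being the part of $F_0^{1/2}$ living on $\ker G\cap\cran F_0$. In particular $\ran F\subseteq\sN\subseteq\ker G$, and $F$ is just the ``compression'' of $F_0$ to $\ker G\cap\cran F_0$.

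Finally the three itemized consequences drop out immediately. If $\ker F_0\supseteq\ker G$, then $\ker G\subseteq\ker F_0$ gives $F_0 P_{\ker G}=0$, and since $\sN\subseteq\ker G$ we get $F=F_0 P_\sN=0$. Case (2), $F_0=G$: then $\ker F_0=\ker G$, so (1) applies and $F=0$ (or directly: $\sN=\ker G\cap\cran G=\{0\}$). Case (3), $\ker G=\{0\}$: then $\sN=\{0\}\cap\cran F_0=\{0\}$, so $F=0$. I expect the only real work to be the identification of $\clos\cL$ — i.e. justifying the ``$\ran G^{1/2}$ vs. $\cran G$'' reduction on the invertible part of $G$ and the density claim — while the rest is formal manipulation with commuting operators; this step is where one must be careful that $\ran G^{1/2}$ is generally not closed, so the equality $\clos\cL=\cran G\oplus(\ker G\cap\ker F_0)$ needs the approximation argument rather than a naive set equality.
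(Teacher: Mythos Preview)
Your argument is correct. You work from the description \eqref{nob1} of $\sN$ as the orthogonal complement of $\clos\cL$, where $\cL=\{g:F_0^{1/2}g\in\ran G^{1/2}\}$, and then identify $\clos\cL=\cran G\oplus(\ker G\cap\ker F_0)$ by a spectral approximation (the inclusion $\cL\subseteq\cran G\oplus(\ker G\cap\ker F_0)$ is immediate from commutativity, and your use of $G^{1/2}(G+\varepsilon I)^{-1/2}\to P_{\cran G}$ gives the reverse inclusion). This is a legitimate and complete route.

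The paper takes a slightly different path: it goes back to the original definition \eqref{prosn}, $\sN=\ker(I-\overline{\cW}\,\overline{\cW}^*)$, and observes that commutativity of $G$ and $F_0$ forces $\overline{\cW}=\cW^*$ to be a nonnegative self-adjoint contraction, so that $\sN=\ker(I-(\cW^*)^2)=\ker(I-\cW^*)$; the equation $\cW^*f=f$, i.e.\ $(G+F_0)^{-1/2}F_0^{1/2}f=f$, is then read off directly as $f\in\ker G\cap\cran F_0$. The paper's argument is a bit shorter because the self-adjointness of $\overline{\cW}$ does the work in one stroke, with no approximation step needed. Your approach, on the other hand, is more self-contained in that it does not require unpacking the operator $\cW$ at all, only the characterization \eqref{nob1}. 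Both proofs of the three enumerated consequences are essentially identical.
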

\begin{proof}
If $F_0G=GF_0$. Then $F^{1/2}_0(G+F_0)^{-1/2}f=(G+F_0)^{-1/2}F^{1/2}_0f$ for all $f\in\ran (G+F_0)^{1/2}$. Hence, $\cW^*=\overline\cW=\cW^{**}$ and $\overline \cW$  is nonnegative contraction. It follows from \eqref{prosn} that
\[
\sN=\ker(I-\overline{\cW}^2)=\ker (I-\cW^*)
=\ker (I-(G+F_0)^{1/2}F_0^{1/2}).
\]
Clearly
\[
f\in \ker (I-(G+F_0)^{1/2}F_0^{1/2})\iff f\in\ker G\cap\cran F_0.
\]
Furthermore, applying \eqref{prosn1} we get implications
\[
\begin{array}{l}
\ker F_0\supseteq\ker G\Longrightarrow \sM_0=\{0\},\\
\ker G=\{0\}\Longrightarrow\sM=\{0\}.
\end{array}
\]
\end{proof}
\begin{corollary}
\label{new1}
If $G\in\bB^+_0(\cH)$ and if $F_0$ is positive definite, then $F=0$.

\end{corollary}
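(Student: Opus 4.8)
The plan is to read off the conclusion directly from Theorem~\ref{form1}. The point is that when $F_0$ is positive definite and $G$ has trivial kernel, the subspace $\sN$ occurring in \eqref{nob1} (equivalently, the subspace $\sM$ in \eqref{prosm}) collapses to $\{0\}$, after which \eqref{prosn1} gives $F=0$ for free.

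First I would exploit positive definiteness of $F_0$: choose $m>0$ with $F_0\ge mI$, so that $F_0^{1/2}\ge\sqrt{m}\,I$ is boundedly invertible and its bounded inverse $F_0^{-1/2}$ is a homeomorphism of $\cH$ onto itself. Hence
\[
\bigl\{g\in\cH:F_0^{1/2}g\in\ran G^{1/2}\bigr\}=F_0^{-1/2}\bigl(\ran G^{1/2}\bigr),
\]
and, since a homeomorphism commutes with the closure operation,
\[
\clos\bigl\{g\in\cH:F_0^{1/2}g\in\ran G^{1/2}\bigr\}=F_0^{-1/2}\bigl(\cran G^{1/2}\bigr).
\]
Next I would use that $\ker G=\{0\}$: as $G$ is self-adjoint, $\ker G^{1/2}=\ker G=\{0\}$, whence $\cran G^{1/2}=\cH$. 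Therefore the displayed closure equals $F_0^{-1/2}(\cH)=\cH$, and so by \eqref{nob1} we get $\sN=\cH\ominus\cH=\{0\}$. Now \eqref{prosn1} yields $F=F_0^{1/2}P_\sN F_0^{1/2}=0$. (Alternatively, since $G+F_0\ge F_0\ge mI$ makes $(G+F_0)^{1/2}$ boundedly invertible, the identical computation with $(G+F_0)^{1/2}$ in place of $F_0^{1/2}$ shows $\Omega=\cH$, hence $\sM=\{0\}$, and then $F=0$ follows from \eqref{prosm1}.)

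I do not expect any genuine obstacle: all the substance is already contained in Theorem~\ref{form1}, and what remains are the elementary facts that a bounded bijection with bounded inverse preserves density (so that $\clos(TS)=T(\clos S)$) and that an injective self-adjoint operator has dense range. The only step deserving a moment of care is the passage to closures in the second display, which rests on the continuity of both $F_0^{1/2}$ and $F_0^{-1/2}$; everything else is immediate.
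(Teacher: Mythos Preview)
Your proof is correct and follows essentially the same approach as the paper: both read off $F=0$ directly from Theorem~\ref{form1} by showing the relevant projection subspace is trivial. The only cosmetic difference is that the paper works with the representation \eqref{prosm1} (the route you mention parenthetically), deriving the explicit description $\sM=(G+F_0)^{1/2}\ker G$ when $F_0$ is positive definite and then concluding $\sM=\{0\}$ from $\ker G=\{0\}$, whereas your main argument uses the equivalent representation \eqref{prosn1} and shows $\sN=\{0\}$ directly; the underlying computation---that a bounded invertible operator pulls the dense range of $G^{1/2}$ back to a dense set---is identical in both cases.
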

\begin{proof}
In the case when $F_0$ is positive definite the subspace $\sM$ defined in \eqref{prosm} can be described as follows:
$\sM= (G+F_0)^{1/2}\ker G$. Hence,  if $\ker G=\{0\}$, then $\sM=\{0\}$ and \eqref{prosm1} gives $F=0$.

\end{proof}

\begin{theorem} Let $G\in\bB^+(\cH)$, $F_0 \in \bB^+(\cH)$, $F_{n+1}=\mu_G(F_n)$, $n\ge 0$, $F=\lim_{n\to \infty}F_n$.
\begin{enumerate}
\item If $\ran F^{1/2}_0\subseteq\ran G^{1/2}$, then $F=0$.
\item If $\ran F^{1/2}_0=\ran G^{\alpha}$, where $\alpha<1/2$, then $F=0$.
\end{enumerate}
\end{theorem}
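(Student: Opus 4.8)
The plan is to exploit the two representations
\[
F=F_0^{1/2}P_\sN F_0^{1/2}=(G+F_0)^{1/2}P_\sM(G+F_0)^{1/2}
\]
from Theorem~\ref{form1} and to show in each part that the orthogonal projection occurring is $0$. Part (1) is immediate: if $\ran F_0^{1/2}\subseteq\ran G^{1/2}$, then $F_0^{1/2}g\in\ran G^{1/2}$ for \emph{every} $g\in\cH$, so the set in \eqref{nob1} is all of $\cH$, whence $\sN=\cH\ominus\cH=\{0\}$ and \eqref{prosn1} yields $F=0$.

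For part (2) I would work with $\sM$ and its description \eqref{prosm}; set $\beta:=1/2-\alpha>0$. Two preliminary facts are needed. First, since $\ran F_0^{1/2}=\ran G^\alpha$ we have $\cran F_0=\cran G^\alpha=\cran G$, hence $\ker(G+F_0)=\ker F_0=\ker G$ and $\cran(G+F_0)=\cran G$. Second,
\[
\ran(G+F_0)^{1/2}=\ran G^\alpha .
\]
The inclusion ``$\supseteq$'' is just $F_0\le G+F_0$. For ``$\subseteq$'', Douglas' theorem applied to $\ran F_0^{1/2}\subseteq\ran G^\alpha$ gives $F_0\le\lambda G^{2\alpha}$ for some $\lambda>0$, while $G=G^\alpha G^{1-2\alpha}G^\alpha\le\|G\|^{1-2\alpha}G^{2\alpha}$; hence $G+F_0\le cG^{2\alpha}$, and Douglas' theorem again gives $\ran(G+F_0)^{1/2}\subseteq\ran G^\alpha$.

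The crucial object is the operator $R:=G^{-\alpha}(G+F_0)^{1/2}$, where $G^{-\alpha}$ is the Moore--Penrose inverse of $G^\alpha$. Since $\ran(G+F_0)^{1/2}=\ran G^\alpha\subseteq\dom G^{-\alpha}$, the operator $R$ is everywhere defined; as a composition of the closed operator $G^{-\alpha}$ with the bounded operator $(G+F_0)^{1/2}$ standing on the right, $R$ is closed, hence bounded by the closed graph theorem. Furthermore $\ran R=G^{-\alpha}(\ran G^\alpha)=\cran G$, because $G^{-\alpha}$ maps $\ran G^\alpha$ bijectively onto $(\ker G^\alpha)^\perp=\cran G$; thus $R\colon\cH\to\cran G$ is a bounded surjection and, by the open mapping theorem, an open map. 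Next I would establish the identity
\[
\{f\in\cH:(G+F_0)^{1/2}f\in\ran G^{1/2}\}=R^{-1}\bigl(\ran G^{\beta}\bigr).
\]
Indeed $(G+F_0)^{1/2}f=G^\alpha(Rf)$ with $Rf\in\cran G$; since $\ran G^{1/2}=G^{\beta}(\ran G^\alpha)$ and $G^\alpha$ is injective on $\cran G$, one checks directly that $G^\alpha(Rf)\in\ran G^{1/2}$ if and only if $Rf\in\ran G^{\beta}$. As $\beta>0$, the subspace $\ran G^{\beta}$ is dense in $\cran G$, and the preimage of a dense subset under an open surjection is dense; therefore $\{f:(G+F_0)^{1/2}f\in\ran G^{1/2}\}$ is dense in $\cH$, so $\Omega=\cH$ in \eqref{prosm}, $\sM=\{0\}$, and \eqref{prosm1} forces $F=0$.

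I expect the only genuine obstacle to be the boundedness and surjectivity of $R$; once these are secured, the rest is the open mapping theorem together with the elementary identification of $\ran(G+F_0)^{1/2}$ and the density of $\ran G^{\beta}$ in $\cran G$. Note that the hypothesis $\alpha<1/2$ is used exactly twice: to guarantee $\ran G^{1/2}\subseteq\ran G^\alpha$ (needed for $\ran(G+F_0)^{1/2}=\ran G^\alpha$), and to guarantee that $G^{\beta}=G^{1/2-\alpha}$ has dense range in $\cran G$, which is what ultimately kills $\sM$.
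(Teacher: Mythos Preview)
Your proof is correct. Part~(1) coincides with the paper's argument verbatim. Part~(2), however, is genuinely different from what the paper does.

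The paper proceeds \emph{dynamically}: using Douglas' theorem it writes $F_0=G^\alpha Q_0G^\alpha$ with $Q_0$ positive definite on $\cH_0=\cran G$, and then checks the conjugation identity
\[
\mu_G\bigl(G^\alpha QG^\alpha\bigr)=G^\alpha\,\mu_{G^{1-2\alpha}}(Q)\,G^\alpha
\]
for positive definite $Q$. Iterating gives $F_n=G^\alpha Q_nG^\alpha$ with $Q_n=\mu_{G^{1-2\alpha}}^{[n]}(Q_0)$, and since $Q_0$ is positive definite and $G^{1-2\alpha}\in\bB^+_0(\cH_0)$, Corollary~\ref{new1} yields $Q_n\to 0$, hence $F=0$.

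Your route is \emph{static}: you never look at the iteration but go straight to the projection $P_\sM$ in \eqref{prosm1}, showing $\sM=\{0\}$. The key new ingredient is the factorisation operator $R=G^{-\alpha}(G+F_0)^{1/2}$, whose boundedness and surjectivity onto $\cran G$ (both consequences of $\ran(G+F_0)^{1/2}=\ran G^\alpha$) let you invoke the open mapping theorem. What you gain is a cleaner, one-shot argument that does not rely on the conjugation formula or on Corollary~\ref{new1}; what the paper's approach buys is an explicit description of each iterate $F_n$ and a self-contained reduction within the machinery already built in Section~2, without appealing to the open mapping theorem.
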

\begin{proof}
(1) Let $\ran F^{1/2}_0\subseteq\ran G^{1/2}$. Then
$
F^{1/2}_0\cH\subseteq\ran G^{1/2}
$. From \eqref{nob1} and \eqref{prosn1} it follows $F=0$.

(2) Suppose $\ran F^{1/2}_0=\ran G^{\alpha},$ where $\alpha< 1/2$.  Then by Douglas theorem \cite{Doug} the operator $F_0$ is of the form
\[
F_0=G^{\alpha}Q_0G^{\alpha},
\]
where $Q$ is positive definite in $\cH_0=\cran G$.
Hence, $G+G^{\alpha}QG^{\alpha}=G^{\alpha}(G^{1-2\alpha}+Q_0)G^{\alpha}$, and
\begin{multline*}
\mu_G(F_0)= \left((G+G^{\alpha}Q_0G^{\alpha})^{-1/2}G^{\alpha}Q_0G^{\alpha} \right)^*(G+G^{\alpha}Q_0G^{\alpha})^{-1/2}G^{\alpha}Q_0G^{\alpha}\\
=G^{\alpha}Q_0(G^{1-2\alpha}+Q_0)^{-1}Q_0G^{\alpha}=G^{\alpha}\mu_{G^{1-2\alpha}}(Q_0)G^{\alpha}.
\end{multline*}
Note that $Q_1\stackrel{def}{=}\mu_{G^{1-2\alpha}}(Q_0)$ is positive definite. Therefore for $F_1=\mu_G(F_0)$ possess the property
$\ran F^{1/2}_1=\ran G^{\alpha}$. By induction we can prove that
\[
F_{n+1}=\mu_G(F_n)=G^{\alpha}\mu_{G^{1-2\alpha}}(Q_n)G^{\alpha}
=G^{\alpha}Q_{n+1}G^{\alpha}.
\]
Using that $Q_0$ is positive definite and applying Corollary \ref{new1}, we get $\lim_{n\to\infty}Q_n=0$. Hence
\[
F=\lim\limits_{n\to\infty}F_n=\lim\limits_{n\to\infty}G^{\alpha}Q_nG^{\alpha}=0.
\]
\end{proof}

\begin{corollary}
\label{mnogo}
Let $\lambda>0$. Define a subspace
\[
\sM_\lambda=\cH\ominus{\left\{{\rm clos}\left\{g\in\cH:(\lambda G+F_0)^{1/2}g\in\ran G^{1/2}\right\}\right\}}
\]
Then
\[
(\lambda G+F_0)^{1/2}P_{\sM_\lambda}(\lambda G+F_0)^{1/2}
=F^{1/2}_0P_{\sN} F^{1/2}_0,
\]
where $\sN$ is given by \eqref{nob1}.

\end{corollary}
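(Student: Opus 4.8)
The plan is to deduce Corollary \ref{mnogo} directly from Theorem \ref{form1} by replacing $G$ with $\lambda G$. Since $\lambda>0$, the operator $\lambda G$ belongs to $\bB^+(\cH)$ and, because $(\lambda G)^{1/2}=\lambda^{1/2}G^{1/2}$, one has the crucial identity
\[
\ran(\lambda G)^{1/2}=\ran G^{1/2}.
\]
Consequently every range condition entering the definitions of the subspaces attached to the pair $(G,F_0)$ is unchanged when $G$ is replaced by $\lambda G$, which is exactly what makes the right-hand side $\lambda$-independent while the left-hand side is not.

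Concretely, first I would introduce the orbit $\wh F_0\stackrel{def}{=}F_0$, $\wh F_{n+1}\stackrel{def}{=}\mu_{\lambda G}(\wh F_n)$ (the mapping associated with $\lambda G$). By Lemma \ref{vspm}, applied with $\lambda G$ in place of $G$, this sequence is non-increasing and has a strong limit $\wh F\in\bB^+(\cH)$. Applying Theorem \ref{form1} to the pair $(\lambda G,F_0)$ furnishes the two representations
\[
\wh F=(\lambda G+F_0)^{1/2}P_{\sM'}(\lambda G+F_0)^{1/2}=F_0^{1/2}P_{\sN'}F_0^{1/2},
\]
where, reading \eqref{prosm} and \eqref{nob1} for the pair $(\lambda G,F_0)$,
\[
\sM'=\cH\ominus\clos\{f\in\cH:(\lambda G+F_0)^{1/2}f\in\ran(\lambda G)^{1/2}\},\qquad \sN'=\cH\ominus\clos\{g\in\cH:F_0^{1/2}g\in\ran(\lambda G)^{1/2}\}.
\]
Invoking $\ran(\lambda G)^{1/2}=\ran G^{1/2}$, the subspace $\sM'$ coincides with the $\sM_\lambda$ defined in the statement, and $\sN'$ coincides with the $\sN$ of \eqref{nob1}. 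Equating the two expressions for $\wh F$ then yields the asserted identity.

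The argument is essentially bookkeeping, and the only point requiring care is to verify that the hypotheses of Lemma \ref{vspm} and Theorem \ref{form1} are genuinely insensitive to rescaling $G$ by a positive constant — which, as noted, reduces entirely to $\ran(\lambda G)^{1/2}=\ran G^{1/2}$ — and that the subspaces $\sM_\lambda$ and $\sN$ are matched correctly with $\sM'$ and $\sN'$. I do not expect a real obstacle here: Corollary \ref{mnogo} is a scaling corollary of Theorem \ref{form1}, recorded precisely because it exhibits a one-parameter family of distinct left-hand sides all equal to the single fixed operator $F_0^{1/2}P_\sN F_0^{1/2}$.
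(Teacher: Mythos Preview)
Your proposal is correct and follows essentially the same route as the paper: replace $G$ by $\lambda G$, apply Theorem \ref{form1} to the pair $(\lambda G,F_0)$ to obtain the two representations \eqref{prosm1} and \eqref{prosn1} of the limit, and then use $\ran(\lambda G)^{1/2}=\ran G^{1/2}$ to identify the resulting subspaces with $\sM_\lambda$ and $\sN$. The paper's proof is exactly this, just written more tersely.
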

\begin{proof} Replace $G$ by $\lambda G$ and consider a sequence
$$F_0, F_1=\mu_{\lambda G}(F_0),\;F_{n}=\mu_{\lambda G}(F_{n-1}),\ldots.$$
Clearly
\begin{multline*}
\cH\ominus{\left\{{\rm clos}\left\{g\in\cH:F^{1/2}_0g\in\ran (\lambda G)^{1/2}\right\}\right\}}\\=
\cH\ominus{\left\{{\rm clos}\left\{g\in\cH:F^{1/2}_0g\in\ran G^{1/2}\right\}\right\}}=\sN.
\end{multline*}
By Theorem \ref{form1}
\[
s-\lim\limits_{n\to\infty}F_n=F^{1/2}_0P_\sN F^{1/2}_0.
\]
On the other side the application of \eqref{prosm1} gives
\[
s-\lim\limits_{n\to\infty}F_n=(\lambda G+F_0)^{1/2}P_{\sM_\lambda}(\lambda G+F_0)^{1/2}.
\]

 \end{proof}

\begin{theorem}
\label{interzero}
Let $G\in\bB^+_0(\cH)$, $\ran G\ne \cH$. Let $F_0\in\bB^+(\cH)$, $F_n\stackrel{def}{=}\mu_G(F_{n-1})$, $n\ge 1$,  $F\stackrel{def}{=}s-\lim_{n\to \infty}F_n$.
Then
\begin{multline*}
F\in\bB^+_0(\cH)\Longrightarrow \left\{\begin{array}{l}F_0\in\bB^+_0(\cH),\\
\ran(G+F_0)\cap\ran G^{1/2}=\{0\}\end{array}\right.\\
\iff
\left\{\begin{array}{l}F_0\in\bB^+_0(\cH),\\
\ran F_0\cap\ran G^{1/2}=\{0\}\end{array}\right..
\end{multline*}
Moreover, the following conditions are equivalent:
\begin{enumerate}
\def\labelenumi{\rm (\roman{enumi})}
\item $F\in\bB^+_0(\cH)$,
\item $\ran (G+F_0)^{1/2}\cap \cran (G+F_0)^{-1/2}G^{1/2}=\{0\}$,
\item for each converging sequence $\{y_n\}\subset\ran G^{1/2}$ such that
$$\lim_{n\to\infty}y_n\in\ran F_0$$
follows that the sequence
$\{(G+F_0)^{-1/2}y_n\}$ is diverging,
\item $\ran F^{1/2}_0\cap\clos\left\{F^{-1/2}_0\left(\ran F^{1/2}_0\cap \ran G^{1/2}\right)\right\}=\{0\}$,
\item for each converging sequence $\{z_n\}\subset \ran F^{1/2}_0\cap\ran G^{1/2}$ such that
$$\lim_{n\to\infty}z_n\in\ran F_0$$
follows that the sequence $\{F^{-1/2}_0 z_n\}$ is diverging.
\end{enumerate}
\end{theorem}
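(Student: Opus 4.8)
The plan is to derive all three assertions — the implication, the equivalence of the two displayed systems, and the equivalence of (i)--(v) — from the two explicit formulas $F=(G+F_0)^{1/2}P_{\sM}(G+F_0)^{1/2}=F^{1/2}_0P_{\sN}F^{1/2}_0$ of Theorem \ref{form1}, the description \eqref{nob1} of $\sN$, the identity \eqref{11} for $\Omega$, and the observation that $\ker(G+F_0)=\ker G\cap\ker F_0=\{0\}$ because $G\in\bB^+_0(\cH)$. From \eqref{11} and \eqref{opercv} the latter gives $\Omega=\cran (G+F_0)^{-1/2}G^{1/2}$, and, writing $(Fh,h)=\|P_{\sM}(G+F_0)^{1/2}h\|^2=\|P_{\sN}F^{1/2}_0h\|^2$, both formulas for $F$ yield
\[
\ker F=\{h\in\cH:(G+F_0)^{1/2}h\in\Omega\}=\{h\in\cH:F^{1/2}_0h\in\sN^{\perp}\},
\]
where $\sN^{\perp}=\clos\{g:F^{1/2}_0g\in\ran G^{1/2}\}$. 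Since $F\in\bB^+_0(\cH)$ just means $\ker F=\{0\}$, everything reduces to deciding when these sets are trivial, and here injectivity of $(G+F_0)^{1/2}$ (a consequence of $\ker(G+F_0)=\{0\}$) is what turns ``$(G+F_0)^{1/2}h\in\Omega\Rightarrow h=0$'' into ``$\ran(G+F_0)^{1/2}\cap\Omega=\{0\}$''.

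For the implication: $\ker F_0\subseteq\ker F$ directly from $F=F^{1/2}_0P_{\sN}F^{1/2}_0$, so $F\in\bB^+_0(\cH)$ forces $F_0\in\bB^+_0(\cH)$; and if $v\in\ran(G+F_0)\cap\ran G^{1/2}$, write $v=(G+F_0)u$ and put $p=(G+F_0)^{1/2}u$, so that $(G+F_0)^{1/2}p=v\in\ran G^{1/2}$ places $p$ in $\{f:(G+F_0)^{1/2}f\in\ran G^{1/2}\}\subseteq\Omega$, whence $u\in\ker F=\{0\}$ and $v=0$. The equivalence of the two displayed systems (both containing $F_0\in\bB^+_0(\cH)$) is then elementary, using only $\ran G\subseteq\ran G^{1/2}$ and linearity of $\ran G^{1/2}$: if $v=F_0u\in\ran G^{1/2}$ then $(G+F_0)u=v+Gu\in\ran G^{1/2}\cap\ran(G+F_0)=\{0\}$, so $u\in\ker(G+F_0)=\{0\}$; conversely if $v=(G+F_0)u\in\ran G^{1/2}$ then $F_0u=v-Gu\in\ran G^{1/2}\cap\ran F_0=\{0\}$, so $u\in\ker F_0=\{0\}$.

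For the list, (i)$\Leftrightarrow$(ii) is immediate: by injectivity of $(G+F_0)^{1/2}$, $\ker F=\{0\}\Leftrightarrow\ran(G+F_0)^{1/2}\cap\Omega=\{0\}$, and $\Omega=\cran (G+F_0)^{-1/2}G^{1/2}$. For (i)$\Leftrightarrow$(iv): (i) forces $F_0\in\bB^+_0(\cH)$, after which $F^{-1/2}_0(\ran F^{1/2}_0\cap\ran G^{1/2})=\{g:F^{1/2}_0g\in\ran G^{1/2}\}$, so its closure is $\sN^{\perp}$ and (iv) says exactly $\ran F^{1/2}_0\cap\sN^{\perp}=\{0\}$, i.e. $\ker F=\{0\}$; here (iv)--(v) are to be read together with $F_0\in\bB^+_0(\cH)$, which holds whenever any of (i)--(v) does. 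Finally, (ii)$\Leftrightarrow$(iii) and (iv)$\Leftrightarrow$(v) are the sequential forms of these closed-subspace conditions: a nonzero element $v$ of the intersection in (ii) (resp. (iv)) is, by definition of the closure, a limit $v=\lim (G+F_0)^{-1/2}y_n$ with $y_n\in\ran G^{1/2}$ (resp. $v=\lim F^{-1/2}_0z_n$ with $z_n\in\ran F^{1/2}_0\cap\ran G^{1/2}$); applying the bounded operator $(G+F_0)^{1/2}$ (resp. $F^{1/2}_0$) gives $y_n\to(G+F_0)^{1/2}v$ (resp. $z_n\to F^{1/2}_0v$), and since $v\in\ran(G+F_0)^{1/2}$ (resp. $v\in\ran F^{1/2}_0$) this limit lies in $\ran(G+F_0)$ (resp. $\ran F_0$); the equivalence of the two displayed systems is what lets one phrase this limit condition in terms of $\ran F_0$. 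Conversely, a sequence of the indicated type whose pseudo-inverse images converge produces — via injectivity of $(G+F_0)^{1/2}$ (resp. $F^{1/2}_0$) pinning the limit down as the unique preimage of $\lim y_n$ (resp. $\lim z_n$) — a nonzero element of the intersection.

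I expect the delicate point to be precisely this bookkeeping with non-closed ranges, their closures, and Moore--Penrose pseudo-inverses. The identification $\Omega=\cran (G+F_0)^{-1/2}G^{1/2}$ genuinely uses $\ker(G+F_0)=\{0\}$; and the passage between the subspace conditions (ii), (iv) and the sequential ones (iii), (v) cannot be made by continuity alone, since the ranges are not closed — one concludes $v\in\ran(G+F_0)^{1/2}$ from $(G+F_0)^{-1/2}y_n\to v$ only after noting $(G+F_0)^{1/2}v=\lim y_n$ and invoking injectivity of $(G+F_0)^{1/2}$, i.e. one uses closedness of the graph of the square root rather than of its range. Reconciling the appearances of $\ran F_0$ and $\ran(G+F_0)$ in (iii), (v) through the equivalence of the two displayed systems is the other place that needs care; everything else is direct substitution into Theorem \ref{form1}.
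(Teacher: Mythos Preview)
Your approach is essentially the paper's: both pivot on the formulas $F=(G+F_0)^{1/2}P_{\sM}(G+F_0)^{1/2}=F_0^{1/2}P_{\sN}F_0^{1/2}$ from Theorem~\ref{form1}, use $\ker(G+F_0)=\{0\}$ to identify $\Omega=\cran(G+F_0)^{-1/2}G^{1/2}$, and read off (i)$\Leftrightarrow$(ii) and (i)$\Leftrightarrow$(iv) directly. Your handling of the implication and of the equivalence of the two displayed systems matches the paper's argument.

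There is one genuine gap, in (ii)$\Leftrightarrow$(iii). From $\neg$(ii) you take a nonzero $v$ in the intersection and a sequence $y_n\in\ran G^{1/2}$ with $(G+F_0)^{-1/2}y_n\to v$; applying $(G+F_0)^{1/2}$ gives $y_n\to(G+F_0)^{1/2}v\in\ran(G+F_0)$, \emph{not} $\ran F_0$ as (iii) requires. You then invoke ``the equivalence of the two displayed systems'' to pass to $\ran F_0$, but that equivalence is a statement about $\ran(G+F_0)\cap\ran G^{1/2}$ versus $\ran F_0\cap\ran G^{1/2}$, and your limit $\lim y_n$ generally does not lie in $\ran G^{1/2}$, so the statement as such is inapplicable. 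What actually works --- and what the paper does explicitly --- is the shift used in the \emph{proof} of that equivalence: write $(G+F_0)^{1/2}v=(G+F_0)u$ and replace $y_n$ by $\tilde y_n=y_n-Gu\in\ran G^{1/2}$; then $\tilde y_n\to F_0u\in\ran F_0$ while $(G+F_0)^{-1/2}\tilde y_n$ still converges, violating (iii). The converse $\neg$(iii)$\Rightarrow\neg$(ii) needs the reverse shift: if $y_n\to F_0f$ with $(G+F_0)^{-1/2}y_n$ convergent, replace $y_n$ by $y_n+Gf$, so the limit becomes $(G+F_0)f\in\ran(G+F_0)$ and closedness of $(G+F_0)^{-1/2}$ then places $(G+F_0)^{1/2}f$ in the intersection of (ii). It is this concrete shift by $\pm Gu$, not the statement of the displayed equivalence, that bridges $\ran(G+F_0)$ and $\ran F_0$ here. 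For (iv)$\Leftrightarrow$(v) no such shift is needed, since both sides already live in the $F_0$ picture; your direct argument there is fine.
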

\begin{proof}
 Clearly $F\in\bB^+_0(\cH)\iff\ker F=\{0\}$. Since $\ker (G+F_0)=\{0\},$ from \eqref{prosm1}, \eqref{prosm}, \eqref{contrv}, \eqref{opercv}  it follows equivalences
\begin{multline*}
\ker F=\{0\}\iff\Omega\cap \ran (G+F_0)^{1/2}=\{0\}\\
\iff\ran (G+F_0)^{1/2}\cap \cran (G+F_0)^{-1/2}G^{1/2}=\{0\}.
\end{multline*}
So (i)$\iff$(ii).
In particular
$$ \ker F=\{0\}\Longrightarrow \ran (G+F_0)^{1/2}\cap \ran (G+F_0)^{-1/2}G^{1/2}=0.$$
Hence
\begin{equation}
\label{inters}
\ran (G+F_0)\cap \ran  G^{1/2}=0.
\end{equation}
Assume that $\ran G^{1/2}\cap\ran F_0\ne\{0\}$. Then $F_0x=G^{1/2}y$
for some $x,y\in \cH$. Set $z\stackrel{def}{=}y+G^{1/2}x$. Then $F_0x=G^{1/2}(z-G^{1/2}x)$ and $(G+F_0)x=G^{1/2}z$ that  contradicts to \eqref{inters}.

Conversely, if $\ran (G+F_0)\cap \ran  G^{1/2}\ne\{0\}$, then $\ran G^{1/2}\cap\ran F_0\ne\{0\}$.
So, \eqref{inters} is equivalent to $\ran G^{1/2}\cap\ran F_0=\{0\}.$
Note that the latter is equivalent to $F^2_0:G=0.$

Suppose $\ran (G+F_0)^{1/2}\cap \cran (G+F_0)^{-1/2}G^{1/2}\ne\{0\}.$ Then there is a sequence $\{x_n\}\subset \cH$ and a vector $f\in\cH$ such that
\[
(G+F_0)^{1/2}f=\lim\limits_{n\to\infty}(G+F_0)^{-1/2}G^{1/2}x_n
\]
Hence $\lim\limits_{n\to\infty}G^{1/2}x_n=(G+F_0)f.$
Let $y_n=G^{1/2}(x_n-G^{1/2}f),$ $n\in\dN$. Then $\{y_n\}\subset\ran G^{1/2}$, $\lim\limits_{n\to\infty}y_n=F_0f,$
and
\[
\lim\limits_{n\to\infty}(G+F_0)^{-1/2}y_n=(G+F_0)^{1/2}f-(G+F_0)^{-1/2}Gf.
\]
Conversely, if there is converging sequence $\{y_n=G^{1/2}z_n\}$ such that
$$\lim_{n\to\infty}y_n= F_0f$$
and the sequence $\{(G+F_0)^{-1/2}y_n\}$ converges as well, then  from
$$\lim_{n\to\infty}G^{1/2}(z_n+G^{1/2}f)=(G+ F_0)f$$
and because the operator $(G+F_0)^{-1/2}$ is closed, we get
\begin{multline*}
(G+F_0)^{1/2}f=(G+F_0)^{-1/2}(G+F_0)f\\
=\lim\limits_{n\to\infty}(G+F_0)^{-1/2}G^{1/2}(z_n+G^{1/2}f).
\end{multline*}
This means that
$\ran (G+F_0)^{1/2}\cap \cran (G+F_0)^{-1/2}G^{1/2}\ne\{0\}.$ Thus, conditions (i) and (ii) are equivalent.
 Using \eqref{ukfdyj}, \eqref{contrw}, \eqref{opwa}, \eqref{equival1}, \eqref{prosn1}, and
Theorem \ref{form1}, the equivalences (i)$\iff$(iv)$\iff$(v) can be proved similarly.
\end{proof}

\section{The mapping $\tau_G$}
Recall that the mapping $\mu_G$ is defined by \eqref{mapmu} and by $\mu^{[n]}_G$ we denote the $n$th iteration of the mapping $\mu_G$.
Note that
\[
\mu^{[n+1]}_G(X)=\mu^{[n]}_G(X)-\mu^{[n]}_G(X):G,\; n\ge 0.
\]
Hence
\begin{equation}
\label{rec}
\sum\limits_{k=0}^n\left(\mu^{[k]}_G(X):G\right)=X-\mu^{[n+1]}_G(X).
\end{equation}
Clearly
\[
X\ge \mu_G(X)\ge \mu^{[2]}_G(X)\ge\cdots\ge \mu^{[n]}_G(X)\ge\cdots.
\]
Therefore, the mapping
\[
\bB^+(\cH)\ni X\mapsto\tau_G(X)\stackrel{def}{=}s-\lim\limits_{n\to\infty}\mu^{[n]}_G(X)\in\bB^+(\cH)
\]
is well defined. Besides, using \eqref{rec} and the monotonicity of parallel sum, we see that
\begin{enumerate}
\item $ \mu^{[n]}_G(X):G\ge \mu^{[n+1]}_G(X):G$ for all $n\in\dN_0,$
\item the series $\sum\limits_{n=0}^\infty \left(\mu^{[n]}_G(X):G\right)$ is converging in the strong sense and
\begin{equation}
\label{ryad}
\sum\limits_{n=0}^\infty \left(\mu^{[n]}_G(X):G\right)=X-\tau_G(X).
\end{equation}
\end{enumerate}
Hence the mapping $\tau_G$ can be defined as follows:
\[
\tau_G(X)\stackrel{def}{=}X-\sum\limits_{n=0}^\infty \left(\mu^{[n]}_G(X):G\right).
\]

Most of the following properties of the mapping $\tau_G$ are already established in the statements above.
\begin{theorem}
\label{propert}
The mapping $\tau_G$ possesses the properties:
\begin{enumerate}
\item $\tau_G(\mu_G(X))=\tau_G(X)$ for all $X\in\bB^+(H),$ therefore,\\ $\tau_G(\mu_G^{[n]}(X))=\tau_G(X)$ for all natural $n$;
 \item  $\tau_G(X):G=0$ for all $X\in\bB^+(H);$
\item  $\tau_G(X)\le X$ for all $X\in\bB^+(\cH)$ and $\tau_G(X)=X$ $\iff$ $X:G=0$ $\iff$ $\ran X^{1/2}\cap\ran G^{1/2}=\{0\}$;
\item $\tau_G(X)=\tau_G(\tau_G(X))$ for an arbitrary $X\in\bB^+(\cH)$;
\item define a subspace
\begin{equation}
\label{ghjcn1}
\sM:
=\cH\ominus{\rm{clos}}\left\{f\in\cH,\;(G+X)^{1/2}f\in\ran G^{1/2}\right\},
\end{equation}
then
\begin{equation}
\label{formula11}
\tau_G(X)=(G+X)^{1/2}P_\sM(G+X)^{1/2};
\end{equation}
\item define a contraction $\cT=(G+X)^{-1/2}X^{1/2}$
and subspace
 \[
\sL\stackrel{def}{=}\ker(I-\cT^*\cT),
\]
then
\begin{equation}
\label{ghjcn2}
\sL=\cH\ominus{\left\{{\rm clos}\left\{g\in\cH,\;X^{1/2}g\in\ran G^{1/2}\right\}\right\}}
\end{equation}
and
\begin{equation}
\label{formula111}
\tau_G(X)=X^{1/2}P_\sL X^{1/2};
\end{equation}
in particular, if $X$ is positive definite, then $\sL=X^{1/2}\ker G$;
\item $XG=GX$ $\Longrightarrow$ $\tau_G(X)=X^{1/2}P_{\sN}X^{1/2}$, where $\sN$ takes the form $\sN=\ker G\cap\cran X$;
\item $\tau_G(G)=0$;
\item $\ran X^{1/2}\subseteq\ran G^{1/2}$ $\Longrightarrow$ $\tau_G(X)=0;$ in particular,
$$\tau_G\left(X:G\right)=0$$
 for every $X\in\bB^+(\cH)$;
\item $\ran X^{1/2}=\ran G^{\alpha}$, $\alpha<1/2$ $\Longrightarrow$ $\tau_G(X)=0;$
\item $\tau_G(\lambda G+X)=\tau_{\eta G}(X)=\tau_G(X)$ for all $\lambda>0$ and $\eta>0$;
\item $\tau_G(\xi X)=\xi \tau_G(X),$ $\xi>0;$
\item if $\ran G^{1/2}_1=\ran G^{1/2}_2$, then
\[
\tau_{G_1}(X)=\tau_{G_2}(X)=
\tau_{G_1}(G_2+X)=\tau_{G_2}(G_1+X)
\]
for all $X\in\bB^+(\cH)$;
\item if $\ran G^{1/2}_1\subseteq\ran G^{1/2}_2$, then $\tau_{G_1}(X)\ge \tau_{G_2} (X)$ for all $X\in\bB^+(\cH)$;
\item $\tau_G(X)\in \bB^+_0(\cH)$ $\Longrightarrow$ $X\in\bB^+_0(\cH)$ and $X^2:G=0$;
\item the following conditions are equivalent:
\begin{enumerate}
\def\labelenumi{\rm (\roman{enumi})}
\item $\tau_G(X)\in\bB^+_0(\cH)$,
\item $ X\in \bB^+_0(\cH)$ and $\ran (G+X)^{1/2}\cap\clos\{(G+X)^{-1/2}\ran G^{1/2}\}=\{0\}$,
\item $ X\in \bB^+_0(\cH)$  and for each converging sequence $\{y_n\}\subset\ran G^{1/2}$ such that
$$\lim_{n\to\infty}y_n\in\ran X$$ it
follows that the sequence
$\{(G+X)^{-1/2}y_n\}$ is diverging,
\item $ X\in \bB^+_0(\cH)$ and
$\ran X^{1/2}\cap\clos\left\{X^{-1/2}\left(\ran X^{1/2}\cap \ran G^{1/2}\right)\right\}=\{0\}$,
\item$ X\in \bB^+_0(\cH)$ and for each converging sequence $\{z_n\}\subset \ran X^{1/2}\cap\ran G^{1/2}$ such that
$$\lim_{n\to\infty}z_n\in\ran X$$
follows that the sequence $\{X^{-1/2} z_n\}$ is diverging;
\end{enumerate}
\item and if $X$ is a compact operator, then $X$ is a compact operator as well, moreover, if $\tau_G(X)$ from the Shatten-von Neumann class $S_p$ \cite{GK}, then $\tau_G(X)\in S_p.$

 \end{enumerate}
\end{theorem}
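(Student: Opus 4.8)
The item asserts that $\tau_G(X)$ inherits compactness, and membership in the Schatten--von Neumann class $S_p$, from $X$. The plan is to read this off from property~(3) above, i.e.\ from the two-sided estimate $0\le\tau_G(X)\le X$, together with the variational (Courant--Fischer) description of singular numbers. First I would recall that for $A,B\in\bB^+(\cH)$ with $A\le B$ the min--max numbers satisfy $s_n(A)\le s_n(B)$ for every $n\in\dN$, where $s_n(C)=\inf_{\dim L<n}\ \sup\{(Cf,f):f\in L^{\perp},\ \|f\|=1\}$; for compact $C$ these are exactly the singular numbers of $C$ (see \cite{GK}). Taking $A=\tau_G(X)$ and $B=X$: if $X$ is compact then $s_n(X)\to0$, hence $s_n(\tau_G(X))\to0$, which for a nonnegative self-adjoint operator is equivalent to $\tau_G(X)$ being compact (its essential norm vanishes). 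If in addition $X\in S_p$ with $p\in(0,\infty)$, then $\sum_{n\ge1}s_n(\tau_G(X))^p\le\sum_{n\ge1}s_n(X)^p<\infty$, so $\tau_G(X)\in S_p$ and $\|\tau_G(X)\|_{S_p}\le\|X\|_{S_p}$.

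A slightly more self-contained route uses the explicit formula \eqref{formula111}, $\tau_G(X)=X^{1/2}P_{\sL}X^{1/2}$ with $P_{\sL}$ an orthogonal projection. If $X$ is compact then so is $X^{1/2}$, and therefore $\tau_G(X)=X^{1/2}\bigl(P_{\sL}X^{1/2}\bigr)$ is compact, being a product with a compact factor. For the Schatten scale one combines $X\in S_p\iff X^{1/2}\in S_{2p}$ with the H\"older-type inequality $\|CD\|_{S_p}\le\|C\|_{S_{2p}}\|D\|_{S_{2p}}$ (valid for all $p\in(0,\infty)$, since $s_{m+n-1}(CD)\le s_m(C)\,s_n(D)$) and the trivial bound $\|P_{\sL}X^{1/2}\|_{S_{2p}}\le\|X^{1/2}\|_{S_{2p}}$, getting $\|\tau_G(X)\|_{S_p}\le\|X^{1/2}\|_{S_{2p}}^2=\|X\|_{S_p}$.

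There is essentially no obstacle here: the item is a one-line corollary of material already proved, once the standard monotonicity of singular numbers under the operator order is invoked. The only point deserving a word of care is the range $0<p<1$, where $\|\cdot\|_{S_p}$ is merely a quasi-norm; but since what is actually used is the pointwise estimate $s_n(\tau_G(X))\le s_n(X)$, the inequality $\sum_n s_n(\tau_G(X))^p\le\sum_n s_n(X)^p$, and hence $\tau_G(X)\in S_p$, holds for every $p\in(0,\infty)$ with no separate treatment of $S_1$ needed.
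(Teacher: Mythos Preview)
Your proposal is correct. Your second route --- writing $\tau_G(X)=X^{1/2}P_{\sL}X^{1/2}$, noting $X^{1/2}\in S_{2p}$ and $P_{\sL}X^{1/2}\in S_{2p}$, and applying H\"older --- is exactly what the paper does (citing \cite[p.~92]{GK}); the paper states this only for $p\ge 1$. Your first route, via the operator inequality $0\le\tau_G(X)\le X$ and monotonicity of the min--max numbers $s_n(\cdot)$, is a genuinely different and in fact slightly cleaner argument: it bypasses the factorisation entirely, gives the norm bound $\|\tau_G(X)\|_{S_p}\le\|X\|_{S_p}$ in one stroke, and covers the full range $0<p<\infty$ uniformly, whereas the product approach needs the H\"older inequality in $S_p$-spaces (which is still fine for $p<1$ but less commonly cited).
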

\begin{proof}
Equalities in (6) follow from \eqref{contrw}, Proposition \ref{singar} and Theorem \ref{form1}, (11) follows from Corollary \ref{mnogo}.
If $\xi>0$, then
\begin{multline*}
\tau_G(\xi X)=(G+\xi X)^{1/2}P_{\sM_{1/\xi}} (G+\xi X)^{1/2}\\
=\xi ((1/\xi)G+X)^{1/2}P_{\sM_{1/\xi}} ((1/\xi)G+X)^{1/2}\\
=\xi\tau_G(X).
\end{multline*}
This proves (12).

If $\ran G^{1/2}_1=\ran G^{1/2}_2$, then
\[
X^{1/2}g\in\ran G^{1/2}_1\iff X^{1/2}g\in\ran G^{1/2}_2.
\]
Now from property (6) follows  the equality $\tau_{G_1}(X)=\tau_{G_2}(X)$. Using (11) we get
\begin{multline*}
\tau_{G_1}(G_2+X)=\tau_{G_2}(G_2+X)=\tau_{G_2}(X)\\
=\tau_{G_1}(X)=\tau_{G_1}(G_1+X)=\tau_{G_2}(G_1+X).
\end{multline*}
 So, property (13) is proved. If $\ran G^{1/2}\subseteq\ran G^{1/2}_2$, then
 $$X^{1/2}g\in\ran G^{1/2}_1\Longrightarrow X^{1/2}g\in\ran G^{1/2}_2.$$
Hence
\begin{multline*}
\sL_1=\cH\ominus{\left\{{\rm clos}\left\{g\in\cH:X^{1/2}g\in\ran G^{1/2}_1\right\}\right\}}\\
\supseteq \sL_2=\cH\ominus{\left\{{\rm clos}\left\{g\in\cH:X^{1/2}g\in\ran G^{1/2}_2\right\}\right\}},
\end{multline*}
and
\[
\tau_{G_1}(X)=X^{1/2}P_{\sL_1}X^{1/2}\ge X^{1/2}P_{\sL_2}X^{1/2}=\tau_{G_2}(X).
\]
If $X$ is compact operator, then from $\tau_G(X)=X^{1/2}P_\sL X^{1/2}$ it follows that $\tau_G(X)$ is compact operator. If $X\in S_p$, where $p\ge 1$ and $S_p$ is Shatten--von Neumann ideal, then from $X^{1/2},P_\sL X^{1/2}\in S_{2p}$ follows that $X^{1/2}P_\sL X^{1/2}\in S_p$ \cite[page 92]{GK}.
\end{proof}
\begin{remark}
Given $G\in\bB^+(\cH)$. All $ \wt G\in\bB^+(\cH)$ such that $\ran \wt G^{1/2}=\ran G^{1/2}$ are of the form
\[
\wt G=G^{1/2}Q G^{1/2},
\]
where $Q,Q^{-1}\in\bB^+(\cran G)$.
\end{remark}
\begin{remark}
\label{extr}
Let $G,\wt G\in\bB^+(\cH)$ and $\ran G^{1/2}=\ran\wt G^{1/2}$.
 The equalities
 $$\tau_G(\wt G+X)=(\wt G+X)^{1/2}\wt P(\wt G+X)^{1/2}=\tau_G(X)=X^{1/2}P_\sL X^{1/2},$$
 where $\wt P$ is the orthogonal projection onto the subspace
 \[
 \cH\ominus{\rm{clos}}\left\{f\in\cH:(\wt G+X)^{1/2}f\in\ran G^{1/2}\right\},
 \]
 see \eqref{ghjcn1} and \eqref{ghjcn2},
  show that $\tau_G (X)$ is an extreme point of the operator interval $[0,X]$ and operator intervals $[0, \wt G+X]$  cf. \cite{Ando_1996}.
\end{remark}
\begin{remark}
\label{osta}
Let $G,X\in\bB_0^+(\cH)$, $\ran G^{1/2}\cap \ran X^{1/2}=\{0\}$. From properties (13) and (16) in Theorem \ref{propert} follows that if the equality
\[
\ran (G+X)^{1/2}\cap\cran((G+X)^{-1/2}G^{1/2})=\{0\}
\]
holds true, then it remains valid if $G$ is replaced  by $\wt G$ such that $\ran \wt G^{1/2}=\ran G^{1/2}.$
\end{remark}

\begin{proposition}
\label{polez}
1) Assume $G\in \bB^+(\cH)$. (a) If $X:G\ne 0,$  then $\left(\mu^{[n]}_G(X)\right):G\ne 0$ for all $n$.

b) If $X\in\bB^+_0(\cH)$, then $\mu^{[n]}_G(X)\in\bB^+_0(\cH)$ for all $n$. Moreover, if $\ran X^{1/2}\supseteq\ran G^{1/2},$ then
$\ran\left(\mu^{[n]}_G(X)\right)^{1/2}=\ran X^{1/2}$ for all $n$.\\
\noindent 2) If $G\in\bB^+_0(\cH)$ and $\tau_G(X)\in\bB^+_0(\cH)$, then $\mu^{[n]}_G(X)\in\bB^+_0(\cH)$
\begin{equation}
\label{dobav2}
\ran \left(\mu^{[n]}_G(X)\right)^{1/2}\cap\clos\left\{\left(\mu^{[n]}_G(X)\right)^{-1/2}\ran G^{1/2}\right\}=\{0\},
\end{equation}
in particular, $\left(\mu^{[n]}_G(X)\right)^2:G=0$ ($\iff \ran\mu^{[n]}_G(X)\cap\ran G^{1/2}=\{0\}$) for all $n$.
\end{proposition}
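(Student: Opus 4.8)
The plan is to reduce each assertion to facts already established about $\mu_G$ and $\tau_G$, propagating them along the orbit $F_0=X$, $F_{n+1}=\mu_G(F_n)$. For 1)(a): recall from \eqref{mapmu} that $\mu_G(Y)=Y\iff Y:G=0\iff \ran Y^{1/2}\cap\ran G^{1/2}=\{0\}$. If $X:G\neq 0$, suppose for contradiction that $\mu_G^{[m]}(X):G=0$ for some minimal $m\ge 1$; then $\mu_G^{[m]}(X)$ is a fixed point, so $\mu_G^{[m-1]}(X):G=\mu_G^{[m-1]}(X)-\mu_G^{[m]}(X)=0$, contradicting minimality unless $m=0$, i.e. $X:G=0$. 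Hence $\mu_G^{[n]}(X):G\neq 0$ for all $n$, which is exactly the claim (and shows the series \eqref{ryad} has all terms nonzero).

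For 1)(b): $X\in\bB^+_0(\cH)$ means $\ker X=\{0\}$. By the formula \eqref{formula111} of Theorem \ref{propert}(6) applied with $X$ in place of the iterate, together with the fact that each $F_n$ is obtained from $F_0$ by the construction of Theorem \ref{form1}, write $F_n=F_0^{1/2}N_nF_0^{1/2}$ with $N_n\in\bB^+(\cH)$, $0\le N_n\le I$. In the proof of Theorem \ref{form1} it was shown moreover (via the factorization $F_1=(G+F_0)^{1/2}M_0^2(G+F_0)^{1/2}$ and the inductive identities $M_{n+1}=(I-M_0+M_n)^{-1}M_n^2$) that $\ker(I-M_n)=\ker(I-M_0)$ for all $n$; transporting this back to the $F_0^{1/2}$-picture gives $\ker F_n=\ker F_0=\{0\}$, so $\mu_G^{[n]}(X)\in\bB^+_0(\cH)$. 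If in addition $\ran X^{1/2}\supseteq\ran G^{1/2}$, then $G=X^{1/2}RX^{1/2}$ for a bounded $R\ge 0$ by Douglas' theorem \cite{Doug}, so $G+X=X^{1/2}(I+R)X^{1/2}$ and one computes, as in the proof of part (2) of the theorem preceding Corollary \ref{mnogo}, that $\mu_G(X)=X^{1/2}\mu_I(\text{something positive definite})X^{1/2}$ has range closure equal to $\cran X^{1/2}$; since $\ker X=\{0\}$ this reads $\ran(\mu_G^{[n]}(X))^{1/2}=\ran X^{1/2}$ by induction, the inductive step repeating the same Douglas-theorem bookkeeping. The main obstacle here is tracking the Moore--Penrose pseudo-inverses carefully through the substitution $G\mapsto X^{1/2}RX^{1/2}$; one must check that $R$ (equivalently $I+R$) is positive definite on $\cran X=\cH$, which follows from $\ker X=\{0\}$ and $\ran G^{1/2}\subseteq\ran X^{1/2}$.

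For 2): assume $G\in\bB^+_0(\cH)$ and $\tau_G(X)\in\bB^+_0(\cH)$. By Theorem \ref{propert}(1), $\tau_G(\mu_G^{[n]}(X))=\tau_G(X)\in\bB^+_0(\cH)$ for every $n$. Apply Theorem \ref{propert}(16) (equivalently Theorem \ref{interzero}) with $F_0$ replaced by $\mu_G^{[n]}(X)$: condition (i) holds, so condition (iv) holds, i.e. $\mu_G^{[n]}(X)\in\bB^+_0(\cH)$ and
\[
\ran(\mu_G^{[n]}(X))^{1/2}\cap\clos\left\{(\mu_G^{[n]}(X))^{-1/2}\left(\ran(\mu_G^{[n]}(X))^{1/2}\cap\ran G^{1/2}\right)\right\}=\{0\},
\]
which is \eqref{dobav2} (the inner intersection with $\ran(\mu_G^{[n]}(X))^{1/2}$ being harmless since we intersect with its pseudo-inverse image). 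Finally Theorem \ref{interzero} also gives, from (i), that $\ran F_0\cap\ran G^{1/2}=\{0\}$ with $F_0=\mu_G^{[n]}(X)$, and by the remark in the proof of Theorem \ref{interzero} this is equivalent to $(\mu_G^{[n]}(X))^2:G=0$; this is the last assertion. The only subtlety is the legitimacy of substituting an arbitrary orbit element for the "initial" operator $F_0$ in Theorems \ref{form1} and \ref{interzero}, which is immediate because $\mu_G^{[n+k]}(X)=\mu_G^{[k]}(\mu_G^{[n]}(X))$ and hence $\tau_G(\mu_G^{[n]}(X))=\tau_G(X)$.
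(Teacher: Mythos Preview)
Your argument for 1)(a) contains a genuine gap. You write: ``suppose $\mu_G^{[m]}(X):G=0$ for some minimal $m\ge 1$; then $\mu_G^{[m]}(X)$ is a fixed point, so $\mu_G^{[m-1]}(X):G=\mu_G^{[m-1]}(X)-\mu_G^{[m]}(X)=0$.'' The first equality is just the definition of $\mu_G$, but the second one, $\mu_G^{[m-1]}(X)-\mu_G^{[m]}(X)=0$, is unjustified: knowing that $\mu_G^{[m]}(X)$ is a fixed point tells you $\mu_G^{[m+1]}(X)=\mu_G^{[m]}(X)$, not that $\mu_G^{[m-1]}(X)=\mu_G^{[m]}(X)$. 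In fact the implication you need, namely $\mu_G(Y):G=0\Rightarrow Y:G=0$, is precisely the nontrivial content of 1)(a) for $n=1$; your induction assumes what has to be proved. The paper establishes the forward direction $Y:G\ne 0\Rightarrow \mu_G(Y):G\ne 0$ by the $M$-calculus: writing $Y=(G+Y)^{1/2}M(G+Y)^{1/2}$, one has $\mu_G(Y)=(G+Y)^{1/2}M^2(G+Y)^{1/2}$, and if $M^{1/2}f=(I-M)^{1/2}h\ne 0$ then $Mf=(I-M)^{1/2}M^{1/2}h$ exhibits a nonzero vector in $\ran M\cap\ran(I-M)^{1/2}$, hence in $\ran(\mu_G(Y))^{1/2}\cap\ran G^{1/2}$.

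Your treatment of 1)(b) is also shaky. You invoke the identity $\ker(I-M_n)=\ker(I-M_0)$ from Theorem~\ref{form1}, but that is a statement about the eigenspace of $M_n$ at the eigenvalue $1$, not about $\ker M_n$; it does not transport to $\ker F_n=\ker F_0$. The paper's argument is direct: $\ker X=\{0\}$ forces $\ker(G+X)=\{0\}$ and $\ran(G+X)^{1/2}\cap\ker M=\{0\}$; since $\ker M^2=\ker M$, one gets $\ker\mu_G(X)=\{0\}$. For the range assertion, the paper observes $\ran X^{1/2}\supseteq\ran G^{1/2}\iff\ran M=\cH_0$, whence $\ran(\mu_G(X))^{1/2}=(G+X)^{1/2}\ran M=\ran(G+X)^{1/2}=\ran X^{1/2}$; your sketch via ``$\mu_G(X)=X^{1/2}\mu_I(\text{something})X^{1/2}$'' is not what the proof of the previous theorem actually does and would need separate justification.

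Your argument for 2) is correct and is exactly the paper's: reduce to $n=1$ via $\tau_G(\mu_G^{[n]}(X))=\tau_G(X)$ and then read off the conclusions from Theorem~\ref{interzero} (equivalently Theorem~\ref{propert}(16)).
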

\begin{proof} Due to the property $\tau_G(\mu_G(X))=\tau_G(X)$ for all $X\in\bB(\cH)$,
it is sufficient to prove that the assertions of proposition hold for $n=1$. Let $\cH_0=\cran(G+X)$.
There exists $M\in \bB^+(\cH_0)$ such that
\[
X=(G+X)^{1/2}M(G+X)^{1/2}, \; G=(G+X)^{1/2}(I-M)(G+X)^{1/2}.
\]
Then
\begin{multline*}
\mu_G(X)=X-X:G\\=(G+X)^{1/2}M(G+X)^{1/2}-(G+X)^{1/2}M(I-M)(G+X)^{1/2}\\
=(G+X)^{1/2}M^2(G+X)^{1/2}.
\end{multline*}
It follows
\[
\ran\left(\mu_G(X)\right)^{1/2}=(G+X)^{1/2}\ran M.
\]
Because $X:G\ne 0$, we have $\ran X^{1/2}\cap\ran G^{1/2}\ne\{0\}$. Therefore
\[
\ran M^{1/2}\cap\ran (I-M)^{1/2}\ne \{0\}.
\]
This means that there are $f,h\in\cH$ such that $M^{1/2}f=(I-M)^{1/2}h$. Hence
\[
Mf=(I-M)^{1/2}M^{1/2}h.
\]
Since $\ran(X:G)^{1/2}=(G+X)^{1/2}\ran (M-M^2)^{1/2}$, we get
$$\ran\left(\mu_G(X)\right)^{1/2}\cap\ran(X:G)^{1/2}\ne \{0\}.$$
But $\ran(X:G)^{1/2}\subseteq\ran G^{1/2}$. Hence $\mu_G(X):G\ne 0.$

Clearly
\begin{multline*}
\ran X^{1/2}\supseteq\ran G^{1/2}\iff \ran M^{1/2}\supseteq\ran (I-M)^{1/2}\\
\iff\ran  M=\cH_0.
\end{multline*}
Hence
\begin{multline*}
\ran\left(\mu_G(X)\right)^{1/2}=(G+X)^{1/2}\ran M=\ran (G+X)^{1/2}\\
=\ran X^{1/2}\supseteq\ran G^{1/2}.
\end{multline*}

If $\ker X=\{0\}$, then $\ker(G+X)=\{0\}$ and $\ran(G+X)^{1/2}\cap\ker M =\{0\}$. It follows that $\ran(G+X)^{1/2}\cap\ker M^2 =\{0\}$.
Hence $\ker\mu_G(X)=\{0\}$.

Since $\tau_G(\mu_G(X))=\tau_G(X)$ and $\tau_G(X)\in\bB^+_0(\cH)$ implies $\ker X=\{0\}$ and $X^2:G=0$, see Theorem \ref{interzero}, we get
$$\tau_G(X)\in\bB^+_0(\cH)\Longrightarrow \ker\mu_G(X)=\{0\},\; \left(\mu_G(X)\right)^2:G=0.$$
\end{proof}
\begin{remark}
\label{dobav} Let $G\in\bB^+_0(\cH)$. Assume that $\ran X^{1/2}\supset \ran G^{1/2}$ and $\tau_G(X)\in\bB^+_0(\cH)$.
Denoting $\sM_n=\clos\left\{\left(\mu^{[n]}_G(X)\right)^{-1/2}\ran G^{1/2}\right\}$, one obtains from \eqref{dobav2} that
\[
\sM_n\cap\ran X^{1/2}=\sM_n^\perp\cap\ran X^{1/2}=\{0\}\;\forall n\in\dN.
\]
These relations yield
\[
\sM_n\cap\ran G^{1/2}=\sM_n^\perp\cap\ran G^{1/2}=\{0\}\;\forall n\in\dN.
\]
If $J_n=P_{\sM_n}-P_{\sM^\perp_n}=2P_{\sM_n}-I,$ $n\in\dN$, then $J_n=J_n^*=J^{-1}_n$ ($J_n$ is a fundamental symmetry in $\cH$ for each  natural number $n$), and
\[
\ran (J_nG^{1/2}J_n)\cap\ran G^{1/2}=\{0\}\; \forall n\in\dN,
\]
cf. \cite{Arl_ZAg_IEOT_2015}, \cite{schmud}.
\end{remark}
Let $G\in\bB^+(\cH)$. Set
\[
\bB^+_G(\cH)=\left\{Y\in\bB^+(\cH): \ran Y^{1/2}\cap\ran G^{1/2}=\{0\}\right\}.
\]
Observe that $Y\in\bB^+_G(\cH)\Longrightarrow Y^{1/2}QY^{1/2}\in \bB^+_G(\cH)$ for an arbitrary $Q\in\bB^+(\cH)$.
The cone $\bB^+_G(\cH)$ is the set of all fixed points of the mappings $\mu_G$ and $\tau_G$. In addition
\[
\bB^+_G(\cH)=\tau_G(\bB^+(\cH)).
\]
Actually, property (13) in Theorem \ref{propert} shows that
if $Y\in\bB^+_G(\cH)$, then for each $\wt G\in\bB^+(\cH)$ such that $\ran \wt G^{1/2}=\ran G^{1/2}$, the operator $Y+\wt G$ is contained in the pre-image $\tau_G^{-1}\{Y\}$,
i.e., the equality
\[
\tau_G(\wt G+Y)=Y=(\wt G+Y)^{1/2}P_{\sM_{\wt G }}(\wt G+Y)^{1/2}
\]
holds, where
$$\sM_{\wt G}=\cH\ominus\{g\in\cH:(\wt G+Y)^{1/2}g\in\ran G^{1/2}\}.$$
In particular,
 \[
 \tau_G(\wt G+\tau_G(X))=\tau_G(X),\;\forall X\in\bB^+(\cH).
 \]
  Thus, the operator $\wt G+Y$ is contained in the \textit{basin of attraction} of the fixed point $Y$ of the mapping $\mu_G$ for an arbitrary $\wt G\in\bB^+(\cH)$ such that $\ran \wt G^{1/2}=\ran G^{1/2}$. In addition since $\ran(\wt G+Y)^{1/2}=\ran G^{1/2}\dot+\ran Y^{1/2}$, the statement 1 b) of Proposition  \ref{polez} yields that
 $$\ran \left(\mu^{[n]}_G(\wt G+Y)\right)^{1/2}=const\supset\ran G^{1/2}\;\forall n\in\dN.$$

\section{Lebesgue type decomposition of nonnegative operators and the mapping $\tau_G$}
Let $A\in\bB^+(\cH)$. T.~Ando in \cite{Ando_1976} introduced and studied the mapping
\[
\bB^+(\cH)\ni B\mapsto [A]B\stackrel{def}{=}s-\lim\limits_{n\to\infty}(nA:B)\in\bB^+(\cH).
\]
The decomposition
\[
B=[A]B+(B-[A]B)
\]
provides the representation of $B$ as the sum of $A$-\textit{absolutely continuous} ($[A]B$)  and $A$-\textit{singular} ($(B-[A]B$) parts  of $B$ \cite{Ando_1976}.
An operator $C\in\bB^+(\cH)$ is called $A$-absolutely continuous \cite{Ando_1976} if there exists a nondecreasing sequence $\{C_n\}\subset\bB^+(\cH)$
such that $C=s-\lim_{n\to\infty}C_n$ and $C_n\le \alpha_n A$ for some $\alpha_n$, $n\in\dN$ ($\iff \ran C^{1/2}_n\subseteq\ran A^{1/2}$ $\forall n\in\dN$). An operator $C\in\bB^+(\cH)$ is called $A$-singular if the intersections of operator intervals $[0,C]$ and $[0,A]$ is the trivial operator ($[0,C]\cap [0,A]=0$). Moreover, the operator $[A]B$ is maximum among all $A$-absolutely continuous nonnegative operators $C$ with $C\le B$.
The decomposition of $B$ on $A$-absolutely continuous and $A$-singular parts is generally non-unique. Ando in \cite{Ando_1976} proved that uniqueness holds if and only if $\ran([A]B)^{1/2}\subseteq\ran A^{1/2}$.
Set
\begin{equation}
\label{omtuf}
\Omega_{A}^B\stackrel{def}{=}{\rm{clos}}\left\{f\in\cH:B^{1/2}f\in\ran A^{1/2}\right\}.
\end{equation}
It is established in \cite{Ando_1976} that the following conditions are equivalent
\begin{enumerate}
\def\labelenumi{\rm (\roman{enumi})}
\item $B$ is $A$-absolutely continuous,
\item $[A]B=B,$
\item $\Omega_A^B=\cH$.
\end{enumerate}
In \cite{Pek_1978} (see also \cite{Kosaki_1984}) the formula
\begin{equation}
\label{formu}
[A]B=B^{1/2}P_{\Omega^B_{A}}B^{1/2}
\end{equation}
has been established.
Hence the operator $[A]B$ possesses the following property, see \cite{Pek_1978}:
\begin{multline*}
\max\left\{Y\in\bB^+(\cH):0\le Y\le B,\;{\rm{clos}}\{Y^{-1/2}(\ran A^{1/2})\}=\cH\right\}\\
=[A]B.
\end{multline*}
The notation $B_{\ran A^{1/2}}$ and the name \textit{convolution on the operator domain} was used for $[A]B$ in \cite{Pek_1978}.
Notice that from \eqref{formu} it follows the equalities
\begin{multline*}
\ran\left([A]B\right)^{1/2}=B^{1/2}\Omega_A^B,\\
B-[A]B=B^{1/2}(I-P_{\Omega^B_{A}})B^{1/2},\\
[A]B:(B-[A]B)=0,\; A:(B-[A]B)=0.
\end{multline*}
In addition due to \eqref{ukfdyj}, \eqref{omtuf}, and \eqref{formu}:
\begin{enumerate}
\item $[A](\lambda B)=\lambda\left([A]B\right),$ $\lambda>0,$
\item $\ran \wt A^{1/2}=\ran A^{1/2}$ $\Longrightarrow$ $[\wt A]B=[A]B $ for all $B\in\bB^+(\cH),$
\item $[A:B]B=[A]B$.
\end{enumerate}

\begin{theorem}
\label{singp}
\begin{enumerate}
\item
 Let $G\in\bB^+(\cH)$. Then for each $X\in\bB^+(\cH)$ the equality
\begin{equation}
\label{razn}
\tau_G(X)=X-[G]X
\end{equation}
holds. Therefore, $\tau_G(X)=0$ if and only if $X$ is $G$-absolutely continuous. In addition
$\tau_G([G]X)=0$ for all $X\in\bB^+(\cH)$.
If $\ran \wt G^{1/2}=\ran G^{1/2}$ for some $\wt G \in\bB^+(\cH)$, then
\begin{equation}
\label{cbyuek}
\tau_G(X)=X-[\wt G] X=\wt G+X-[G](\wt G+X).
\end{equation}
Hence
\begin{equation}
\label{cnhfyyj}
\wt G=[G](\wt G +X)-[G](X),
\end{equation}
and
\begin{equation}
\label{tot}
X-\tau_G(X)=[G](\wt G+X)-\wt G.
\end{equation}
In addition
\begin{equation}
\label{izm}
\sum\limits_{n=0}^\infty \left(\mu^{[n]}_G(X):G\right)=[G]X,\; \forall X\in\bB^+(\cH).
\end{equation}
\item The following inequality is valid for an arbitrary $X_1, X_2\in\bB^+(\cH)$:
\begin{equation}
\label{ytjblf}
\tau_G(X_1+X_2)\le \tau_G(X_1)+\tau_G(X_2).
\end{equation}

\item  the following statements are equivalent:
\begin{enumerate}
\def\labelenumi{\rm (\roman{enumi})}
\item $\tau_G(X)\in\bB^+_0(\cH)$,
\item $X\in\bB^+_0(\cH)\quad\mbox{and}\quad\left([G]X\right):X^2=0,$
\item $G+X\in\bB^+_0(\cH)$ and $[G](G+X):( G+X)^{2}=0.$
\end{enumerate}
\end{enumerate}
\end{theorem}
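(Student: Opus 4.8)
The plan is to derive the whole theorem from the single identity \eqref{razn}, which itself drops out of formulas already in hand.

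\emph{Proof of (1).} For the key equality I would compare two explicit formulas built from the \emph{same} subspace. Formula \eqref{formula111} of Theorem \ref{propert} reads $\tau_G(X)=X^{1/2}P_{\sL}X^{1/2}$ with $\sL=\cH\ominus\clos\{g\in\cH:X^{1/2}g\in\ran G^{1/2}\}$, and Pekarev's formula \eqref{formu} reads $[G]X=X^{1/2}P_{\Omega_G^X}X^{1/2}$ with $\Omega_G^X=\clos\{f\in\cH:X^{1/2}f\in\ran G^{1/2}\}$ (see \eqref{omtuf}). Since $\sL=\cH\ominus\Omega_G^X$, i.e.\ $P_{\sL}=I-P_{\Omega_G^X}$, subtraction gives $\tau_G(X)=X^{1/2}X^{1/2}-X^{1/2}P_{\Omega_G^X}X^{1/2}=X-[G]X$, which is \eqref{razn}. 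The rest of (1) is then formal: $\tau_G(X)=0\iff[G]X=X\iff X$ is $G$-absolutely continuous (the last equivalence is recalled above Theorem \ref{singp}); and $[G]X$ is $G$-absolutely continuous, being the strong limit of the nondecreasing sequence $nG:X\le nG$, so $\tau_G([G]X)=0$. For \eqref{cbyuek} I would use the invariance $[\wt G]X=[G]X$ (valid since $\ran\wt G^{1/2}=\ran G^{1/2}$) together with $\tau_G(\wt G+X)=\tau_G(X)$ (Theorem \ref{propert}(13)) and \eqref{razn} applied to $\wt G+X$; equating the two resulting expressions for $\tau_G(X)$ yields \eqref{cnhfyyj}, rearranging yields \eqref{tot}, and \eqref{izm} is \eqref{ryad} combined with \eqref{razn}.

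\emph{Proof of (2).} By \eqref{razn}, the inequality \eqref{ytjblf} is equivalent to the superadditivity of Ando's map, $[G]X_1+[G]X_2\le[G](X_1+X_2)$. I would prove this from the maximality property recalled above Theorem \ref{singp}: each $[G]X_i$ is $G$-absolutely continuous and $\le X_i$; a sum of two $G$-absolutely continuous operators is again $G$-absolutely continuous (add their nondecreasing approximating sequences together with the bounds $\alpha_n^{(i)}G$); hence $[G]X_1+[G]X_2$ is a $G$-absolutely continuous operator dominated by $X_1+X_2$, so it is dominated by the maximal such operator $[G](X_1+X_2)$. Then $\tau_G(X_1+X_2)=(X_1+X_2)-[G](X_1+X_2)\le(X_1+X_2)-[G]X_1-[G]X_2=\tau_G(X_1)+\tau_G(X_2)$.

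\emph{Proof of (3).} I would identify condition (ii) with condition (iv) of Theorem \ref{propert}(17). Assume $X\in\bB^+_0(\cH)$. By \eqref{formu} (and the range identity $\ran([G]X)^{1/2}=X^{1/2}\Omega_G^X$ recorded there), together with $\ran(X^2)^{1/2}=\ran X=X^{1/2}\ran X^{1/2}$ and the injectivity of $X^{1/2}$, one gets $\bigl([G]X\bigr):X^2=0\iff\ran([G]X)^{1/2}\cap\ran(X^2)^{1/2}=\{0\}\iff\Omega_G^X\cap\ran X^{1/2}=\{0\}$. Next, for injective $X$ one has the set identity $\{f:X^{1/2}f\in\ran G^{1/2}\}=X^{-1/2}\bigl(\ran X^{1/2}\cap\ran G^{1/2}\bigr)$ (apply $X^{-1/2}$ and use $X^{-1/2}X^{1/2}=I$, $X^{1/2}X^{-1/2}=I$ on $\ran X^{1/2}$), whence $\Omega_G^X=\clos\{X^{-1/2}(\ran X^{1/2}\cap\ran G^{1/2})\}$, so $\Omega_G^X\cap\ran X^{1/2}=\{0\}$ is exactly condition (iv) of Theorem \ref{propert}(17). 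Thus (ii)$\iff$(iv)$\iff$(i). Finally (i)$\iff$(iii) follows by applying the already-proved equivalence (i)$\iff$(ii) with $X$ replaced by $G+X$, since $\tau_G(G+X)=\tau_G(X)$ by Theorem \ref{propert}(11).

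\emph{Main obstacle.} The only genuinely non-routine work is the range/closure bookkeeping in part (3): justifying $\ran([G]X)^{1/2}=X^{1/2}\Omega_G^X$, the preimage identity under $X^{-1/2}$, and the fact that intersecting ranges commutes appropriately with applying the injective operator $X^{1/2}$ and with taking closures. Parts (1) and (2) are then immediate consequences of the formulas and of the variational description of the map $X\mapsto[G]X$ already established.
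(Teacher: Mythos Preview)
Your argument is correct and follows essentially the same route as the paper: both obtain \eqref{razn} by matching \eqref{formula111} against Pekarev's formula \eqref{formu} via $P_{\sL}=I-P_{\Omega_G^X}$, both reduce \eqref{ytjblf} to the superadditivity $[G]X_1+[G]X_2\le [G](X_1+X_2)$ (you derive it from the maximality characterization, the paper simply cites \cite{E-L}), and both handle part (3) by rewriting $([G]X):X^2=0$ as $X^{1/2}\Omega_G^X\cap\ran X=\{0\}$ and matching with condition (d) of the equivalence list in Theorem \ref{propert}. One small correction: your two references to ``Theorem \ref{propert}(17)'' should read (16); item (17) is the Schatten--von Neumann statement.
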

\begin{proof}
(1) From \eqref{omtuf}, \eqref{formu}, and Theorem \ref{propert} we get equalities
\begin{multline*}
\tau_G(X)=X^{1/2}(I-P_{\Omega^X_{G}})X^{1/2}=X-[G]X,\\
\tau_G(\wt G+X)=(\wt G+X)^{1/2}(I-P_{\Omega^{\wt G+X}_{G}})(\wt G+X)^{1/2}=\wt G+X-[G](\wt G+X).
\end{multline*}
Then \eqref{cbyuek}, \eqref{cnhfyyj}, and \eqref{tot} follow from the equalities $\tau_G(X)= \tau_{\wt G}[X]=\tau_G(\wt G+X)$.
Since $[G]([G]X)=[G]X$, we get  $\tau_G\left([G]X\right)=0.$

Note that using the equality $[G](X+\alpha G)=[G]X+\alpha G$ \cite[Lemma 1]{Nishio}
 and the equality
 \[
 \tau_G(\alpha G+X)=\alpha G+X-[G](\alpha G+X),
 \]
 we get $\tau_G(\alpha G+X)=X-[G]X=\tau_G(X)$.

Equation \eqref{izm} follows from \eqref{ryad} and \eqref{razn}.

(2) Inequality \eqref{ytjblf} follows from the inequality, see \cite{E-L},
$$[G](X_1+X_2)\ge [G]X_1+[G]X_2$$
 and equality \eqref{razn}.

(3) From \eqref{omtuf} and statements (16a) and (16d) of Theorem \ref{propert} it follows
\begin{multline*}
\tau_G(X)\in\bB^+_0(\cH)\iff X\in\bB^+_0(\cH)\quad\mbox{and}\quad\Omega_G^X\cap\ran X^{1/2}=\{0\}\\
\iff X\in\bB^+_0(\cH)\quad\mbox{and}\quad X^{1/2}\Omega_G^X\cap\ran X=\{0\}\\
\iff X\in\bB^+_0(\cH)\quad\mbox{and}\quad\ran\left([G]X\right))^{1/2}\cap\ran X=\{0\}\\
\iff X\in\bB^+_0(\cH)\quad\mbox{and}\quad\left([G]X\right):X^2=0.
\end{multline*}
Further we use the equality $\tau_G(X)=\tau_G(G+X)$, see statement (13) of Theorem \ref{propert}.
\end{proof}
\section{The mappings $\{\mu_G^{[n]}\},$ $\tau_G,$ and intersections of domains of unbounded self-adjoint operators}
\label{applll}
Let $A$ be an unbounded self-adjoint operator in an infinite dimensional Hilbert space $\cH$. J.von Neumann \cite[Satz 18]{Neumann} established that if $\cH$ is \textit{separable}, then there is a self-adjoint operator unitary equivalent to $A$ such that its domain has trivial intersection with the domain of $A$. Another proof of this result was proposed by J.~Dixmier in \cite{Dix}, see also \cite[Theorem 3.6]{FW}. In the case of \textit{nonseparable} Hilbert space in \cite{ES} it is constructed an example of unbounded self-adjoint operator $A$ such that for any unitary $U$ one has $\dom (U^*AU)\cap\dom A\ne \{0\}$. So, in general, the von Neumann theorem does not hold. It is established in \cite[Theorem 4.6]{ES}, that the following are equivalent for a dense operator range $\cR$ (the image of a bounded nonnegative self-adjoint operator in $\cH$ \cite{FW}) in an infinite-dimensional Hilbert space:
\begin{enumerate}
\def\labelenumi{\rm (\roman{enumi})}
\item
there is a unitary operator $U$ such that $U\cR\cap\cR=\{0\}$;
\item for every subspace (closed linear manifold) $\cK\subset \cR$ one has $\dim \cK\le\dim \cK^\perp$.
\end{enumerate}

In the theorem below we suggest another several statements equivalent to the von Neumann's theorem.
\begin{theorem}
\label{ytcgjl}
Let $\cH$ be an infinite-dimensional complex Hilbert space and let $A$ be an unbounded self-adjoint operator in $\cH$.
Then the following assertions are equivalent
\begin{enumerate}
\item there exists a unitary operator $U$ in $\cH$ such that
$$\dom(U^*AU)\cap\dom A=\{0\};$$
\item there exists an unbounded self-adjoint operator $S$ in $\cH$ such that
$$\dom S\cap\dom A=\{0\};$$
\item there exists a fundamental symmetry $J$ in $\cH$ ($J=J^*=J^{-1}$) such that
$$\dom(JAJ)\cap\dom A=\{0\};$$
\item there exists a subspace $\sM$ in $\cH$ such that
$$\sM\cap\dom A=\sM^\perp\cap\dom A=\{0\};$$
\item there exists a positive definite self-adjoint operator $B$ in $\cH$ such that
$$\dom B\supset\dom A\quad\mbox{and}\quad\clos\left\{B\dom A\right\}\cap \dom B=\{0\},$$
\item there exists a closed densely defined restriction $A_0$ of $A$ such that $\dom (AA_0)=\{0\}$ (this yields, in particular, $\dom A^2_0=\{0\}$).
 \end{enumerate}
\end{theorem}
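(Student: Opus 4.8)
The plan is to establish a cycle of implications among (1)–(6). The easy implications are (1)$\Rightarrow$(2), (3)$\Rightarrow$(2), and (1)$\Rightarrow$(3): for (1)$\Rightarrow$(2) take $S=U^*AU$, which is self-adjoint, unbounded, and unitarily equivalent to $A$; for (3)$\Rightarrow$(2) take $S=JAJ$; and (1)$\Leftrightarrow$(3) follows once we produce a fundamental symmetry $J$ doing the job, since a fundamental symmetry is in particular unitary. So the real content is to connect the analytic conditions (2) and (6) back to the existence of a subspace as in (4), and to connect (4) and (5) to the machinery of $\tau_G$ developed above.

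The key idea is to realize $\dom A$ as a dense operator range. Write $A$ via its spectral decomposition and set $G=(I+A^2)^{-1/2}\in\bB^+_0(\cH)$; then $\ran G=\ran G^{1/2}\cdot(\ldots)$—more precisely $\dom A=\ran(I+A^2)^{-1/2}=\ran G$, and since $A$ is unbounded, $\ran G\ne\cH$. Now $\dom A\cap\sM=\{0\}$ for a subspace $\sM$ is exactly $\ran G\cap\sM=\{0\}$, and $\dom A\cap\sM^\perp=\{0\}$ is $\ran G\cap\sM^\perp=\{0\}$. Thus (4) says precisely: there is a subspace $\sM$ with $\ran G\cap\sM=\ran G\cap\sM^\perp=\{0\}$. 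If $J=2P_\sM-I$ is the associated fundamental symmetry, then $JGJ$ has $\ran(JGJ)^{1/2}=J\ran G^{1/2}$, and one checks (this is the substance of Remark \ref{dobav} and the references \cite{Arl_ZAg_IEOT_2015}, \cite{schmud}) that $\ran(JGJ)\cap\ran G=\{0\}$ is equivalent to $\dom(JAJ)\cap\dom A=\{0\}$, because $JGJ$ and $(I+(JAJ)^2)^{-1/2}$ have the same range. This yields (4)$\Leftrightarrow$(3).

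For (4)$\Leftrightarrow$(5): given $\sM$ as in (4), take $B$ so that $\clos\{B\dom A\}=\sM$; conversely, given $B$ with $\dom B\supset\dom A$ and $\clos\{B\dom A\}\cap\dom B=\{0\}$, let $\sM=\clos\{B\dom A\}$ and use that $\dom B\supseteq\dom A\supseteq$ (a core), so $\sM\cap\dom A=\{0\}$ and $\sM^\perp\supseteq$ ... — here one must check $\sM^\perp\cap\dom A=\{0\}$, which follows because $\dom A\subseteq\dom B$ and $\dom B\cap\sM=\{0\}$ forces... actually the cleaner route is to invoke Theorem \ref{propert}(17) and Theorem \ref{singp}(3): the condition $\tau_G(X)\in\bB^+_0(\cH)$ with $X$ chosen so that $\ran X^{1/2}$ is the relevant operator range is exactly condition (ii) there, $X\in\bB^+_0(\cH)$ and $\ran(G+X)^{1/2}\cap\clos\{(G+X)^{-1/2}\ran G^{1/2}\}=\{0\}$, which translates into the domain statement in (5) after setting $B=(G+X)^{-1/2}$ restricted appropriately. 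The equivalence with (2) and (6) comes from the $\bB^+_0(\cH)$-valued output: $\tau_G(X)\in\bB^+_0(\cH)$ produces a positive definite self-adjoint operator (namely $\tau_G(X)^{-1}$, possibly unbounded) whose domain meets $\dom A$ trivially—giving (2)—and a closed restriction $A_0$ with $\dom(AA_0)=\{0\}$ via $A_0=A\uphar(\text{graph intersected with }\ran\tau_G(X)^{1/2})$, which gives (6); and conversely, from any $S$ as in (2) or $A_0$ as in (6) one reads off a subspace for (4) (e.g. $\sM=\cran S^{1/2}$ or the closure of the range of the relevant resolvent of $A_0$).

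**The main obstacle.** The hardest step is the precise translation between the abstract operator-range identities about $G$ and $\tau_G$ — in particular the equivalences in Theorem \ref{propert}(17) and Theorem \ref{singp}(3) — and the statements about domains of the unbounded operators $A$, $S$, $JAJ$, $A_0$. One must carefully track how $\ran G^{1/2}$, $\ran(G+X)^{1/2}$, and the pseudo-inverses $(G+X)^{-1/2}$ correspond to $\dom A$, $\dom S$, and graph subspaces, and show that the "diverging sequence" conditions in (17)(iii),(v) are exactly the triviality of the relevant domain intersections. Handling the passage to (6) — producing a \emph{closed} densely defined restriction $A_0$ of $A$ with $\dom(AA_0)=\{0\}$, and conversely extracting the subspace from such an $A_0$ — will require an extra argument identifying $\dom(AA_0)=\{0\}$ with $\ran A_0$ meeting $\dom A$ trivially, and is where the bookkeeping is most delicate.
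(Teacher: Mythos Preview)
Your plan has the right skeleton for the implications involving $(1),(2),(3),(4),(5)$: encoding $\dom A$ as an operator range $\ran G^{1/2}$ with $G=(|A|+I)^{-2}$, invoking Proposition~\ref{root} to pass from $F:G=0$ to a projection and hence to a subspace, and using the equivalences in Theorem~\ref{propert}(16) with $B=(G+X)^{-1/2}$ to handle~(5). This is essentially the paper's route for that block.

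However, two of your concrete suggestions are wrong, and one is a genuine missing idea.

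First, for $(2)\Rightarrow(4)$ you propose $\sM=\cran S^{1/2}$. This does not work: $S$ need not be nonnegative, and even if it is, for a positive definite unbounded $S$ one has $\cran S^{1/2}=\cH$, which gives no subspace. The correct step (and the paper's) is to set $F=(|S|+I)^{-2}$, note $\ran F^{1/2}\cap\ran G^{1/2}=\{0\}$ so $F:G=0$, and then use Proposition~\ref{root} to conclude that the operator $M$ in \eqref{fg2} is a projection $P$; the subspace is $\sM=\ran P$, and one checks $\ran P\cap\ran(G+F)^{1/2}=\ran(I-P)\cap\ran(G+F)^{1/2}=\{0\}$, which a fortiori gives~(4).

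Second, and more seriously, your treatment of $(4)\Leftrightarrow(6)$ is not a plan but a hope. The suggestion ``$A_0=A\uphar(\text{graph intersected with }\ran\tau_G(X)^{1/2})$'' does not define a closed densely defined restriction with the required property, and extracting the subspace from $A_0$ via ``the closure of the range of the relevant resolvent'' is not specified. The paper's argument here does \emph{not} go through $\tau_G$ at all: it uses the Cayley transform $\cU=(A-iI)(A+iI)^{-1}$. Given a subspace $\sM$ as in~(4), set $A_0=A\uphar(I-\cU)\sM$; then $\dom A_0$ is dense iff $\sM^\perp\cap\dom A=\{0\}$, and $\dom(AA_0)=\{0\}$ (which unpacks as $\ran A_0\cap\dom A=\{0\}$ together with $\ker A_0=\{0\}$) is equivalent to $(I+\cU)\sM\cap\ran(I-\cU)=\{0\}$, i.e.\ $\sM\cap\dom A=\{0\}$. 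Conversely, from $A_0$ one takes $\sM=\ran(A_0+iI)$. You are missing this Cayley-transform bridge entirely; without it, the equivalence with~(6) is not established.
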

\begin{proof}
Let $|A|=\sqrt{A^2}$. Set $G=\left(|A|+I\right)^{-2}$. Then $G\in\bB^+_0(\cH)$ and $\ran G^{1/2}=\dom A.$

According  to \cite[Proposition 3.1.]{Arl_ZAg_IEOT_2015} the following assertion for the operator range $\cR$ are equivalent
\begin{enumerate}
\def\labelenumi{\rm (\roman{enumi})}
\item There exists in $\cH$ an orthogonal projection $P$ such that
\[
\ran P\cap\cR=\{0\} \ \ \ {\rm{and}}  \ \ \ \ran (I-P)\cap\cR=\{0\} \ .
\]
\item There exists in $\cH$ a fundamental symmetry $J$  such that
\[
J\cR\cap\cR=\{0\} \ .
\]
\end{enumerate}
Now we will prove that
(2)$\Longrightarrow$(1), (3), (4), (5). The existence of self-adjoint $S$ with the property $\dom S\cap\dom A=\{0\}$ implies the existence of
$F\in\bB^+_0(\cH)$ such that $\ran F^{1/2}\cap\ran G^{1/2}=\{0\}$ (for example, take $F=(|S|+I)^{-2}$). Then the equality $F:G=0$ yields, see Proposition \ref{root} that
\[
G=(G+F)^{1/2}P(G+F)^{1/2},\; F=(G+F)^{1/2}(I-P)(G+F)^{1/2},
\]
where $P$ is orthogonal projection in $\cH$. The equalities $\ker G=\ker F=\{0\}$ imply
$$\ran P\cap\ran (G+F)^{1/2}=\ran (I-P)\cap\ran (G+F)^{1/2}=\{0\}.$$
Since $\ran G^{1/2}\subset\ran (G+F)^{1/2}$, we get
$$\ran P\cap\ran G^{1/2}=\ran (I-P)\cap\ran G^{1/2}=\{0\}.$$
Let $\sM=\ran P$, then holds (4). Put $J=P-(I-P)=2P-I$. The operator $J$ is fundamental symmetry and $J\ran G^{1/2}\cap\ran G^{1/2}=\{0\}$. This gives (3).

Since $\ker F=\{0\}$ and $F=\tau_G(F)=\tau_G(G+F)$, using Theorem \ref{propert}, equalities \eqref{ghjcn1}, \eqref{formula11}, and Theorem \ref{interzero} we obtain
\[
\ran (G+F)^{1/2}\cap \cran (G+F)^{-1/2}G^{1/2}=\{0\}.
\]
Denoting $B=(G+F)^{-1/2}$, we arrive to (5).

 Let us proof (5)$\Longrightarrow$(2). Set $X=B^{-2}$. Then $\ran X^{1/2}\supset\ran G^{1/2}$ and
\[
X\in \bB^+_0(\cH),\;
\ran X^{1/2}\cap\clos\left\{X^{-1/2}\ran G^{1/2}\right\}=\{0\}.
\]
The equivalence of conditions (16a) and (16d) of Theorem \ref{propert} implies $\ker\tau_G(X)=\{0\}.$ Since the operator $Y=\tau_G(X)$ possesses the property
$\ran Y^{1/2}\cap\ran G^{1/2}=\{0\}$, we get for $S=Y^{-2}$ that $\dom S\cap\dom A=\{0\}$.

Now we are going to prove $(4)\iff (6)$.
Suppose (6) is valid, i.e., $A_0$ is closed densely defined restriction of $A$ such that $\dom (AA_0)=\{0\}$.
Let
 $$\cU=(A-iI)(A+iI)^{-1}$$
 be the Cayley transform of $A$. $\cU$ is a unitary operator and
 \[
 A=i(I+\cU)(I-\cU)^{-1},\; \dom A=\ran (I-\cU),\ran A=\ran (I+\cU).
 \]

Let $\cU_0=(A_0-iI)(A_0+iI)^{-1}$ be the Cayley transform of $A_0$. Set $\sM\stackrel{def}{=}\ran (A_0+iI)$. Then $\cU_0=\cU\uphar\sM$,
\begin{multline*}
\dom A_0=\ran (I-\cU_0)=(I-\cU)\sM,\\
 \ran A_0=\ran (I+\cU_0)=(I+\cU)\sM.
\end{multline*}
Because $\dom A_0$ is dense in $\cH$, we get $\sM^\perp\cap\dom A=\{0\}$.
The equality $\dom (AA_0)=\{0\}$ is equivalent to
\[
\left\{\begin{array}{l}\ran A_0\cap\dom A=\{0\},\\
\ker A_0=\{0\}\end{array}
\right..
\]
The latter two equalities are equivalent to $\sM\cap\dom A=\{0\}$. Thus (4) holds. If (4) holds, then define
the symmetric restriction $A_0$ as follows
\[
 \dom A_0=(I-\cU)\sM,\; A_0=A\uphar\dom A_0
 \]
 we get $\dom(AA_0)=\{0\}$.
  The proof is complete.

\end{proof}
Let us make a few remarks.
\begin{remark}
\label{ljd1}
If (5) is true, then
\begin{enumerate}
\item
the more simple proof of the implication (5)$\Rightarrow$(2) is the observation that (5) implies $\dom B^2\cap \dom A=\{0\}$;
\item taking into account that $B^{-1}$ is bounded and $\dom A$ is dense in $\cH$, we get
 \[
 \left(\cH\ominus\clos\left\{B\dom A\right\}\right)\cap \dom B=\{0\},
 \]
if we set $\sM\stackrel{def}{=}\clos\left\{B\dom A\right\}$, we see that the inclusion $\dom A\subset\dom B$ implies (4), i.e., this is one more way to prove (5)$\Longrightarrow$(4) and (5)$\Longrightarrow$(6);
\item using the proof of Theorem \ref{ytcgjl} and equalities \eqref{ghjcn2} and \eqref{formula111}, we see that the operator $S\stackrel{def}{=}\left(B^{-1}P_{\sM^\perp}B^{-1}\right)^{-1}$ is well defined, self-adjoint positive definite, and $\dom S\cap\dom A=\{0\}$;
\item denoting $B_0=B\uphar\dom A$ and taking the closure of $B_0,$ we get the closed densely defined positive definite symmetric operator $\bar{B}_0$ (a closed restriction of $B$) such that
\[
\dom (B\bar{B}_0)=\{0\}.
\]
\end{enumerate}

\end{remark}
\begin{remark}

In the case of an infinite-dimensional Hilbert space be separable.
K.~Schm\"{u}dgen in \cite[Theorem 5.1]{schmud} established the validity of assertion (4) for an arbitrary $A$.  In \cite{Arl_ZAg_IEOT_2015} using parallel addition of operators it is shown that validity (2) for an arbitrary unbounded self-adjoint $A$ implies (4).

The first construction of a densely defined closed symmetric operator $T$ such that $\dom T^2=\{0\}$ was given by M.A.~Naimark \cite{Naimark1}, \cite{Naimark2}.
In \cite{Chern} P.~Chernoff gave an example of semi-bounded from bellow symmetric $T$ whose square has trivial domain. K.~Schm\"{u}dgen in \cite[Theorem 5.2]{schmud} proved that each unbounded self-adjoint operator $H$ has two closed densely defined restrictions $H_1$ and $H_2$ such that
\[
\dom H_1\cap\dom H_2=\{0\}\quad\mbox{and}\quad\dom H^2_1=\dom H^2_2=\{0\}.
\]
 In \cite{ArlKov_2013} the abstract approach to the construction of examples of nonnegative self-adjoint operators $\cL$ and their closed densely defined restrictions $\cL_0$
such that $\dom(\cL\cL_0)=\{0\}$ has been proposed. In \cite[Theorem 3.33]{Arl_ZAg_IEOT_2015} it is established that each unbounded self-adjoint $A$ has two closed densely defined restrictions $A_1$ and $A_2$ possessing properties
\begin{multline*}
\dom A_1\dot+\dom A_2=\dom A,\; \dom (AA_1)=\dom (AA_2)=\{0\},
\\
\dom A_1\cap\dom A^2=\dom A_2\cap\dom A^2=\{0\}.
\end{multline*}

M.~Sauter in the e-mail communication with the author suggested
another proof of the equivalence of (1) and (2) in Theorem \ref{ytcgjl}. His proof is essentially relied  on the methods developed in the paper \cite{ES}.
\end{remark}
We conclude this paper by the theorem related to the assertions (2) and (5) of Theorem \ref{ytcgjl}. The proof is based on the properties of the mappings $\{\mu^{[n]}_G\}$ and $\tau_G$.

\begin{theorem}
\label{bynth}
Let $\cH$ be an infinite dimensional separable Hilbert space and let $A$ be unbounded self-adjoint operator in $\cH$.
Then for each positive definite self-adjoint operator $S$ such that $\dom S\cap \dom A=\{0\}$ there exists a sequence $\{S_n\}$ of positive definite operators possessing properties
\begin{itemize}
\item $\dom S_n=\dom S\dot+\dom A$ $\forall n$,
\item $\clos\left\{S_n\dom A\right\}\cap \dom S_n=\{0\}$ $\forall n$,
\item $\dom S^2_n\cap\dom A=\{0\}$ $\forall n,$
\item if $\sL_n=\cH\ominus \clos\left\{S_n\dom A\right\}$, then $S=\left(S^{-1}_nP_{\sL_n}S^{-1}_n\right)^{-1}$ $\forall n$,
\item for each $f\in \dom S\dot+\dom A$ the sequence $\{||S_nf||\}_{n=1}^\infty$ is nondecreasing,
\item $\dom S=\left\{f: \sup\limits_{n\ge 1}||S_n f||<\infty\right\},$
\item  ${\rm s-R}-\lim\limits_{n\to\infty}S_n=S$, where ${\rm s-R}$ is the strong resolvent limit of operators \cite[Chapter 8, \S 1]{Ka}.

\end{itemize}
\end{theorem}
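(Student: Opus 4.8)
The plan is to route everything through the operator $G:=(|A|+I)^{-2}$, which, exactly as in the proof of Theorem~\ref{ytcgjl}, belongs to $\bB^+_0(\cH)$ and satisfies $\ran G^{1/2}=\dom A$. Given a positive definite $S$ with $\dom S\cap\dom A=\{0\}$, I would put $F:=S^{-2}\in\bB^+_0(\cH)$, so that $F^{1/2}=S^{-1}$ and hence $\ran F^{1/2}=\dom S$; the hypothesis then says $\ran F^{1/2}\cap\ran G^{1/2}=\{0\}$, i.e. $F:G=0$, so that $F\in\bB^+_G(\cH)$ is a common fixed point of $\mu_G$ and $\tau_G$. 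Next I set $X_0:=G+F$, $X_n:=\mu^{[n]}_G(X_0)$ for $n\ge1$, and $S_n:=X_n^{-1/2}$ (Moore--Penrose pseudo-inverse). All seven assertions then reduce to already established properties of the orbit $\{X_n\}$.

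First I would collect the structural facts about $\{X_n\}$. By the ``basin of attraction'' discussion following Proposition~\ref{polez} (equivalently, Theorem~\ref{propert}(1),(11) together with $\tau_G(F)=F$), for every $n$ one has $\tau_G(X_n)=\tau_G(X_0)=F$, $X_n\in\bB^+_0(\cH)$, and $\ran X_n^{1/2}=\ran X_0^{1/2}=\ran G^{1/2}\dot+\ran F^{1/2}=\dom A\dot+\dom S$, independently of $n$. Translating through $\dom S_n=\ran X_n^{1/2}$, this is the first item. Since $\tau_G(X_0)=F\in\bB^+_0(\cH)$, Proposition~\ref{polez}(2) applies to $X_0$ and gives, for all $n$, both $\ran X_n\cap\ran G^{1/2}=\{0\}$ and $\ran X_n^{1/2}\cap\clos\{X_n^{-1/2}\ran G^{1/2}\}=\{0\}$; since $\dom S_n^2=\ran X_n$, $\dom A=\ran G^{1/2}$, and $S_n\dom A=X_n^{-1/2}\ran G^{1/2}$, these are precisely the third item ($\dom S_n^2\cap\dom A=\{0\}$) and the second item ($\clos\{S_n\dom A\}\cap\dom S_n=\{0\}$).

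The fourth item is the formula of Theorem~\ref{propert}(6): because $\ran G^{1/2}\subseteq\ran X_n^{1/2}$, one checks $\{g:X_n^{1/2}g\in\ran G^{1/2}\}=X_n^{-1/2}\ran G^{1/2}=S_n\dom A$, so the subspace $\sL_n=\cH\ominus\clos\{S_n\dom A\}$ is precisely the subspace $\sL$ attached to $X_n$ in that proposition; consequently $S_n^{-1}P_{\sL_n}S_n^{-1}=X_n^{1/2}P_{\sL_n}X_n^{1/2}=\tau_G(X_n)=\tau_G(X_0)$, from which the fourth item follows upon inversion. The fifth item is immediate: $\{X_n\}$ is non-increasing, so $\|S_nf\|^2=(X_n^{-1}f,f)$ is non-decreasing in $n$ on the common domain $\dom S_n=\ran X_n^{1/2}$.

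The remaining two items carry the analytic weight, and the monotone convergence of closed quadratic forms is what I expect to be the main obstacle. Since $X_n\downarrow X_\infty:=\tau_G(X_0)=F=S^{-2}$ strongly, the non-decreasing closed forms $\mathfrak t_n[f]:=\|S_nf\|^2=\|X_n^{-1/2}f\|^2$, all carried by the fixed dense domain $\ran X_0^{1/2}$, increase to the closed form associated with $X_\infty^{-1}=S^2$, namely $f\mapsto\|Sf\|^2$ with form domain $\dom S$; hence $\{f:\sup_n\|S_nf\|<\infty\}=\dom S$, the sixth item. By Kato's monotone convergence theorem for forms \cite{Ka}, $S_n^2\to S^2$ and therefore $S_n\to S$ in the strong resolvent sense, the seventh item. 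The points that need care are: that $X_n\downarrow X_\infty$ as bounded operators forces $X_n^{-1}\uparrow X_\infty^{-1}$ in the form sense (so that the limit form is genuinely that of $S^2$, with the operator domain $\dom S$, and not merely a larger form domain, appearing in the sixth item); and that the approximating forms live on a common dense core $\ran X_0^{1/2}$, which is why one first needs the constancy of $\ran X_n^{1/2}$ from Proposition~\ref{polez} and the explicit description of the iterates in Theorem~\ref{form1}. Once these facts are in hand, all seven properties are verified.
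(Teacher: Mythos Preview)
Your construction and the overall strategy coincide with the paper's: set $G=(|A|+I)^{-2}$, $F=S^{-2}$, $F_n=\mu_G^{[n]}(G+F)$, $S_n=F_n^{-1/2}$, and read off items (1)--(5) from Proposition~\ref{polez} and Theorem~\ref{propert}(6), exactly as you do. The only noticeable difference is in the last two items. The paper does not invoke the monotone convergence theorem for forms; it obtains the strong resolvent convergence directly from the L\"owner--Heinz inequality, which gives $F_n^{1/2}\downarrow F^{1/2}$ strongly, i.e.\ $S_n^{-1}\to S^{-1}$ strongly, and hence $S_n\to S$ in the strong resolvent sense because $0$ is a common regular point. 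For the sixth item the paper supplies the short weak-compactness argument that you correctly identify as the point needing care: if $\sup_n\|S_nf\|<\infty$, pick a weakly convergent subsequence $S_{n_k}f\rightharpoonup\varphi$ and use $F_{n_k}^{1/2}\to F^{1/2}$ strongly to conclude $f=F^{1/2}\varphi\in\dom S$. Your appeal to Kato's theorem is a legitimate packaging of the same content, but note that the theorem itself does not identify the limiting form; that identification (the inclusion $\{f:\sup_n\|S_nf\|<\infty\}\subseteq\ran F^{1/2}$) is precisely the weak-compactness step, so in the end you would have to write out the same argument the paper gives.
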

\begin{proof}
Let $G\stackrel{def}{=}(|A|+I)^{-2}$, $F\stackrel{def}{=}S^{-2}$. Then $\ran G^{1/2}=\dom A,$ $\ran F^{1/2}=\dom S$. According to Theorem \ref{propert} the equalities
\[
F=\tau_G(F)=\tau_G(G+F)
\]
are valid.
Set
\[
F_n=\mu_G^{[n]}(G+F), n=0,1,\ldots.
\]
Then $\{F_n\}$ is non-increasing sequence of operators, $\tau_G(F_n)=F$, and
$$s-\lim\limits_{n\to\infty}F_n=F.$$
 Due to the L\"{o}wner-Heinz inequality we have that the sequence of operators $\{F^{1/2}_n\}_{n=1}^\infty$ is non-increasing. In addition
 \[
 s-\lim\limits_{n\to\infty}F^{1/2}_n=F^{1/2}.
 \]
Since $\ran F^{1/2}_0=\ran G^{1/2}\dot+\ran F^{1/2}$, Proposition \ref{polez} yields $\ran F^{1/2}_n=\ran G^{1/2}\dot+\ran F^{1/2}$ for all natural numbers $n$.
Now define
\[
S_n=F^{-1/2}_n,\; n=0,1,\ldots.
\]
Then  for all $n$:
$$\dom S_n=\ran F^{1/2}_n=\ran G^{1/2}\dot+\ran F^{1/2}=\dom A\dot+\dom S,$$
the sequences of unbounded nonnegative self-adjoint operators $\{S^2_n\}$ and $\{S_n\}$ are non-decreasing,
\[
\lim\limits_{n\to\infty} S^{-1}_n=S^{-1},\;\lim\limits_{n\to\infty} S^{-2}_n=S^{-2}.
\]
The latter means, that
\[
{\rm s-R}-\lim\limits_{n\to\infty}S_n=S,\; {\rm s-R}-\lim\limits_{n\to\infty}S^2_n=S^2.
\]
Taking into account that $\tau_G(F_n)=F$ and using statement 2) of Proposition \ref{polez} we conclude that the equality
\[
\cran F^{1/2}_n \cap \clos\{F^{-1/2}_n\ran G^{1/2}\}=\{0\}
\]
holds for each $n\in\dN$.
Hence $\clos\left\{S_n\dom A\right\}\cap \dom S_n=\{0\}$ and $\dom S^2_n\cap\dom A=\{0\}$ for all natural numbers $n.$
Set
\[
\sL_n:=\cH\ominus \clos\left\{S_n\dom A\right\}
=\cH\ominus\clos\{F^{-1/2}_n\ran G^{1/2}\}.
\]
Taking in mind the equality (see \eqref{ghjcn2} and \eqref{formula111})
\[
F=\tau_G(F_n)=F^{1/2}_nP_{\sL_n}F^{1/2}_n,
\]
we get
$S=\left(S^{-1}_nP_{\sL_n}S^{-1}_n\right)^{-1}$ for all $n\in\dN$.

Let $f\in\dom S=\ran F^{1/2}$. Since $F_n\ge F$ for all $n\in\dN$, we have $F^{-1}_n\le F^{-1}$, i.e., $||S_n f||\le ||S f||$ for all $n$.

Suppose that $||S_n f||\le C$ for all $n$. Then there exists a subsequence of vectors $\{S_{n_k}f\}_{k=1}^\infty$ that
converges weakly to some vector $\varphi$ in $\cH$, i.e,
\[
\lim\limits_{k\to\infty} (S_{n_k} f, h)=(\varphi, h) \quad\mbox{for all} \quad h\in\cH.
\]
Further for all $g\in \cH$
\begin{multline*}
(f, g)=(F^{1/2}_{n_k}S_{n_k}f,g)=(S_{n_k}f,F^{1/2}_{n_k}g)\\
=(S_{n_k}f,F^{1/2}g)+ (S_{n_k}f,F^{1/2}_{n_k}g-F^{1/2}g)\rightarrow (\varphi,F^{1/2}g)=(F^{1/2}\varphi, g).
\end{multline*}
It follows that $f\in\dom S$.

Thus, we arrive to the equality
$\dom S=\left\{f: \sup\limits_{n\ge 1}||S_n f||<\infty\right\}.$
The proof is complete.
\end{proof}


\begin{thebibliography}{AHS}
\bibitem{AD} W.N. Anderson and R.J. Duffin, \textit{Series and parallel
addition of matrices}, J. Math. Anal. Appl. \textbf{26} (1969),
576-594.


\bibitem{AT}
W.N.~Anderson and G.E.~Trapp, \textit{Shorted operators, II.}  SIAM
J. Appl. Math., \textbf{28} (1975), 60--71.

\bibitem{Ando_1976}
T.~Ando, \textit{Lebesgue type decomposition of positive operators}, Acta Sci.Math. (Szeged)
\textbf{38} (1976), 253--260.



\bibitem{Ando_1996}
T.~Ando, \textit{Extreme points of an intersection of operator intervals}, Proc Inter. Math Conf.'94 Kaohsing, Taiwan, p.1--18,
World Scientific, 1996.
\bibitem{Ar2}
Yu.M.~Arlinski\u{\i}, \textit{On the theory of operator means}, Ukr.
Mat. Zh., \textbf{42} (1990), No.6, 723--730 (in Russian). English
translation in Ukr. Math. Journ., \textbf{42} (1990), No.6,
639--645.

\bibitem{ArlKov_2013}
Yu.~Arlinski\u{\i} and Yu.~Kovalev, \textit{Factorizations of
nonnegative symmetric operators}, Methods of Funct. Anal. and
Topol., \textbf{19} (2013), No.3. 211--226.


\bibitem{Arl_ZAg_IEOT_2015}
Yu.~Arlinski\u{\i} and V.~Zagrebnov, \textit{Around the Van
Daele--Schm\"{u}dgen theorem}, Integ. Equat. and Oper. Theory, \textbf{81} (2015), No.1, 53--95.

\bibitem{Chern}
P.R.~Chernoff, \textit{A semibounded closed symmetric operator whose
square has trivial domain}, Proc. Amer. Math. Soc., \textbf{89}
(1983), 289--290,


\bibitem{Dix}
J. Dixmier, \textit{Etude sur les varietes et les operateurs de
Julia}, Bull. Soc. Math. France \textbf{77} (1949), 11--101.



\bibitem{Doug}
R.G.~Douglas, \textit{On majorization, factorization and range
inclusion of operators in Hilbert space}, Proc. Amer. Math. Soc.
\textbf{17} (1966), 413--416.

\bibitem{ES}
A.F.M.~ter Elst and M.~Sauter, \textit{Nonseparability and von Neumann's theorem for domains of unbounded operators}, arXiv:1504.07790v1, 2015.

\bibitem{E-L}
S.L.~Eriksson and H.~Leutwiler, \textit{A potential-theoretic approach to parallel addition},
Math. Ann. \textbf{274} (1986), 301--317.


\bibitem{FW}
P.A.~Fillmore and J.P.~Williams, \textit{On operator ranges},
Advances in Math. \textbf{7} (1971), 254--281.

\bibitem{GK}
I.C.~Gohberg and M.G.~Kre\u{\i}n, \textit{Introduction to the Theory of Linear Nonself-adjoint Operators} (Translations of Mathematical Monographs), American Mathematical Society, Providence, Rhode Island, 1969.
\bibitem{Kosaki_1984}
H.~Kosaki, \textit{Remarks on Lebesgue type decomposition of positive operators}, Journ. Oper. Theory
\textbf{11} (1984), 137--143.

\bibitem{Ka}
T. Kato, Perturbation theory for linear operators, Springer-Verlag,
Berlin, Heidelberg, 1995, 619 pages.

\bibitem{K-A}
F.~Kubo and T.~Ando, \textit{Means of positive linear operators},
Math. Ann. \textbf{246}(1980), No.3 , 205--224.

\bibitem{Naimark1}
M.A.~Na\u{\i}mark, \textit{On the square of a closed symmetric
operator}, Dokl. Akad. Nauk SSSR, \textbf{26} (1940), 863--867 (in
Russian).

\bibitem{Naimark2}
M.A.~Na\u{\i}mark, \textit{Supplement to the paper "On the square of
a closed symmetric operator"}, Dokl. Akad. Nauk USSR, \textbf{28}
(1940), 206--208 (in Russian).


\bibitem{Neumann} J. von Neumann, \textit{Zur Theorie der Unbeschr\"{a}nktenMatrizen}, J. Reine Angew. Math. \textbf{161} (1929), 208--236.

\bibitem{Nishio}
K.~Nishio, \textit{Characterization of Lebesgue-type decomposition of positive operator}, Acta Sci. Math., 42 (1980), 143--152.

\bibitem{Pek_1978}
E.L.~Pekarev,
\textit{Convolutions onto an operator domain},	Funkts. Analiz i Ego Prilozhen. \textbf{12} (1978), No.3, 87--88 (in Russian).
English translattion in Funct. Anal. Appl. \textbf{12} (1979), 230-231 .

\bibitem{PSh}
E.L. Pekarev and Ju.L. Smul'yan, \textit{Parallel addition and
parallel subtraction of operators}, Izv. AN SSSR \textbf{40} (1976),
366--387 (in Russian). English translation in Math. USSR Izv.
\textbf{10} (1976), 289--337.

\bibitem{schmud} K.~Schm\"{u}dgen, \textit{On domains of powers of closed symmetric
operators}, J. Oper. Theory, \textbf{9} (1983), 53--75.

\end{thebibliography}
\end{document}